\pgfplotsset{compat=1.15}
\newtheorem{introtheorem}{Theorem}
\newtheorem*{introtheorem*}{Theorem}
\newtheorem{thm}{Theorem}[section]
\newtheorem{prop}[thm]{Proposition}
\newtheorem{lem}[thm]{Lemma}
\theoremstyle{definition}
\newtheorem{defn}[thm]{Definition}
\newtheorem{remark}[thm]{Remark}
\newtheorem{exa}[thm]{Example}
\newenvironment{dedication}
   {\vspace{0ex}\begin{quotation}\begin{center}\begin{em}}
   {\par\end{em}\end{center}\end{quotation}}
\def\A{\ensuremath{\mathcal{A}}}
\def\SB{\ensuremath{\mathcal{S}}}
\def\C{\ensuremath{\mathcal{C}}}
\def\D{\ensuremath{\mathcal{D}}}
\def\Q{\ensuremath{\mathcal{Q}}}
\def\FF{\ensuremath{\mathbb{F}}}
\def\ZZ{\ensuremath{\mathbb{Z}}}
\def\QQ{\ensuremath{\mathbb{Q}}}
\def\ii{\mathbf i}
\def\jj{\mathbf j}
\def\rk{\operatorname{m}}
\def\Out{\mathrm{Out}}
\def\Aut{\mathrm{Aut}}
\def\sgn{\operatorname{sgn}}
\def\diag{\operatorname{diag}}
\def\tuples{\mathcal{T}}
\def\ord{\operatorname{ord}}
\def\barsub{\mathbf{C}}
\newcommand{\GL}{\operatorname{GL}\nolimits}
\newcommand{\SL}{\operatorname{SL}\nolimits}
\newcommand{\PSL}{\operatorname{PSL}\nolimits}
\newcommand{\GU}{\operatorname{GU}\nolimits}
\newcommand{\SU}{\operatorname{SU}\nolimits}
\newcommand{\PSU}{\operatorname{PSU}\nolimits}
\newcommand{\PGU}{\operatorname{PGU}\nolimits}
\newcommand{\transpose}{{\operatorname{T}\nolimits}}
\newcommand{\ls}[2]{{}^{{#1}\!}{#2}}
\newcommand{\qbox}[1]{\quad\hbox{#1}\quad}
\date{\today}
\title{Quillen's conjecture and unitary groups}
\author{Antonio D\'{i}az Ramos}
\address{Departamento de {\'A}lgebra, Geometr{\'\i}a x Topolog{\'\i}a,
Universidad de M{\'a}\-la\-ga, Apdo correos 59, 29080 M{\'a}laga,
Spain.}
\email{adiazramos@uma.es}
\begin{document}

\subjclass[2010]{55U10; 05E45, 20J99.}

\maketitle

\begin{dedication}
To Mariola and María.
\end{dedication}

\begin{abstract}
We prove that the Quillen posets $\A_p(H)$ of $p$-extensions $H$ of simple unitary groups have non-zero homology in the largest possible dimension, with just a few exceptions. This establishes a conjecture raised by Aschbacher–Smith in 1992. In particular, by their work and a more recent article by Piterman–Smith, Quillen’s conjecture on the $p$-subgroup posets holds for odd primes.
\end{abstract}

\section{Introduction}
\label{section:introduction}

In \cite[Corollary 2]{Brown75}, K.S. Brown proved the following result: If $\SB_p(G)$ is the poset of the non-trivial $p$-subgroups of a finite group $G$ ordered by inclusion, and $|\SB_p(G)|$ denotes its topological realization, then $\chi(|\SB_p(G)|)\equiv 1$ modulo the highest power of $p$ dividing the order of $G$, where $\chi$ stands for Euler characteristic. Thus, here, the poset $\SB_p(G)$ relates a topological invariant to an algebraic invariant and, since then, $p$-subgroups posets have linked algebraic topology and finite group theory from other perspectives,  including homotopy equivalences and Tits buildings \cite{Quillen1978}, group cohomology and geometries \cite{Webb1987}, homology decompositions \cite{BS2008,Dwyer1997}, and, more recently, fusion systems \cite{AKO2011}.

In \cite{Quillen1978}, Quillen focused on the poset $\A_p(G)$, which consists of the non-trivial elementary abelian $p$-subgroups of the finite group $G$ ordered by inclusion, proving that $|\SB_p(G)|$ and $|\A_p(G)|$ are homotopy equivalent, and studying the relation between the algebraic properties of $G$ and the topological properties of these spaces. Among his results, it stands out ``conical contractibility'' \cite[Proposition 2.4]{Quillen1978}: If we denote by $O_p(G)$ the largest normal $p$-subgroup of $G$, then the condition $O_p(G)\neq 1$ implies that $|\A_p(G)|$ is contractible. Moreover, Quillen conjectured that the converse statement should also hold, \cite[Conjecture 2.9]{Quillen1978},
\[
\text{Quillen's conjecture: If $O_p(G)=1$, then $|\A_p(G)|$ is not contractible.}
\]
This conjecture was established by Quillen himself for groups of $p$-rank at most 2, groups of Lie type in characteristic $p$, and solvable groups. The former case was recently extended to $p$-rank $3$ \cite{PSC2021}, and the latter case was extended to $p$-solvable groups by Alperin and others  \cite[Theorem 8.2.12]{Smith2011}, employing the Classification of the Finite Simple Groups (CFGS for short), and by Hawkes and Isaacs without the CFSG if the Sylow $p$-subgroup is abelian \cite{HawksIsaacs1988}. 

We refer the reader to \cite[Chapter 8]{Smith2011} for a historical account and we discuss next the contributions \cite{AK1990,AS1993,P2021,PS2022}: In \cite{P2021}, it is proven that Quillen's conjecture is equivalent to the version obtained by replacing the non-contractibility conclusion with non-triviality of reduced integral homology. Nevertheless, in general, these works establish cases of the stronger version involving non-triviality of reduced rational homology. In doing so, a non-zero homology cycle is proven to exist but, in most cases, it is not explicitly determined. 

The greatest advances towards proving the conjecture in full generality are \cite{AS1993} by Aschbacher and Smith and \cite{PS2022} by Piterman and Smith. These works are based on the analysis of the components of a hypothetical minimal counterexample to the conjecture, and they employ the CFSG. Very roughly, the method in \cite{AS1993} states that, under certain group-theoretic reductions on a group $G$ and for $p>5$, the following dichotomy should hold: for any component $L$ of $G$, either every $p$-extension of $L$ satisfies the Quillen dimension property for $p$, or $L$ possesses a so-called Robinson subgroup (see \cite[p.491]{AS1993} and \cite{Robinson88}). 

In order to explain the notions employed in the previous paragraph, let $\rk_p(G)$ denote the $p$-rank of the finite group $G$, and recall that this number is the largest integer $m$ such that $G$ has an elementary abelian $p$-subgroup of order $p^m$. Then we say that $G$ has the \emph{Quillen-dimension property} for $p$, written as $\Q\D_p$, if $|\A_p(G)|$ has non-zero reduced rational homology in its maximal dimension, i.e., in degree $\rk_p(G)-1$, see Definition \ref{def:QDp}. This property was implicitly employed by Quillen to establish his conjecture in the aforementioned cases, and plays a central role in the works of Aschbacher and Smith and of Piterman and Smith. On the other hand, the $p$-extensions of a group $L$ are the split extensions $LB$ of $L$ by an elementary abelian $p$-group $B$ of outer automorphisms of $L$. The case $B=1$ is allowed, so that $L$ is a $p$-extension of itself. The components of a group are defined in Section \ref{section:group_theoretical_preliminaries}.

We remark that the primes $p=2,3,5$ were excluded from the analysis in \cite{AS1993} since most of the preliminary results needed to prove the main theorem of \cite{AS1993} rely on the CFSG and facts about $p$-extensions when $p$ is odd. In \cite{PS2022}, the authors refine the dichotomy proposed above under similar restrictions and extend it to all odd primes, and carry out relevant progress for $p=2$; see more comments on this latter case in page \pageref{intro:p=2_detail}. To be specific, we state Aschbacher and Smith's main result below, and we note that Piterman and Smith's improvement on this result in \cite[Theorem 1.1]{PS2022} implies the same conclusion while replacing the hypothesis (i) by the weaker condition of $p$ being and odd prime. 

\begin{introtheorem*}[{\cite[Main Theorem]{AS1993}}]
Let $G$ be a finite group. Assume that
\begin{enumerate}
\item[(i)] $p$ is a prime with $p > 5$; and
\item[(ii)] whenever $G$ has a unitary component $\PSU_n(q)$ with $p$ dividing $q+1$ and $q$ odd, then $\Q\D_p$ holds for all $p$-extensions of $\PSU_l(q^{p^e})$ with $l\leq n$ and $e\in \ZZ$.
\end{enumerate}
Then $G$ satisfies Quillen's conjecture for $p$.
\end{introtheorem*}

Although unitary groups $\PSU_n(q)$ satisfy Quillen’s conjecture by the work of Aschbacher and Kleidman in \cite{AK1990}, it has been unknown so far whether $\Q\D_p$ holds for them and their $p$-extensions when $p$ is an odd prime dividing $q+1$. Therefore, by \cite{AS1993,PS2022}, showing that $p$-extensions of unitary groups $\PSU_n(q)$, with $p,q$ odd and $p\mid q+1$, satisfy $\Q\D_p$ is the only obstruction left to conclude with the proof of Quillen's conjecture for odd primes. 

In \cite{Diaz2016} and \cite{DiazMazza2020}, a new geometric method is introduced and employed to prove the Quillen dimension property for $p$ in known cases, namely, for $p$-solvable groups if $p$ is any prime, and for groups of type alternating, symmetric, linear, unitary, symplectic, and orthogonal for some odd primes $p$. This approach works but for a few exceptions, including the case $\PSU_n(q)$ when $p$ is odd and $p\mid q+1$. In the current work, we elaborate on this geometric method and show that unitary groups and their $p$-extensions do satisfy the Quillen dimension property, as conjectured by Aschbacher and Smith in \cite[Conjecture 4.1]{AS1993}, see Theorem \ref{thm:mainPSU} below. We proceed via a seemingly indirect path, as we deal first with general unitary groups.

\begin{introtheorem}\label{thm:mainPGU}
Let $q$ be a prime power, $p$ an odd prime dividing $q+1$, $n\geq 2$, and $G$ equal to $\PGU_n(q)$ or $\PGU_n(q)$ extended by field automorphisms of order $p$, where we exclude $(p,q)=(3,2)$ in the former case, and $n=2$ and $(p,q)=(3,8)$ in the latter case. Then $\widetilde{H}_{\rk_p(G)-1}(|\A_p(G)|;\ZZ)\neq 0$ and hence $G$ has the Quillen dimension property for $p$.
\end{introtheorem}

We emphasize that, in this result, as well as in \cite{Diaz2016} and \cite{DiazMazza2020}, explicit non-zero homology cycles are constructed, as opposed to what has been done in previous works. The $p$-extensions of $\PSU_n(q)$ are listed in Proposition \ref{prop:pextensions_PSUnq}, and that   Theorem A suffices to prove the Quillen dimension property for all these $p$-extensions, follows by an argument that traces back to \cite{AS1993} and that preserves the explicit constructions. We explicitly state this result below, and we include a detailed proof in Section \ref{section:TheoremB}.
 
\begin{introtheorem}\label{thm:mainPSU}
Let $q$ be a prime power, $p$ an odd prime dividing $q+1$, $n\geq 2$, and $G=\PSU_n(q)B$ a $p$-extension,  where we exclude $(p,q)=(3,2)$, and $(p,q)=(3,8)$ if $B$ contains field automorphisms of order $p$. Then $\widetilde{H}_{\rk_p(G)-1}(|\A_p(G)|;\ZZ)\neq 0$ and hence $G$ has the Quillen dimension property for $p$.
\end{introtheorem} 

This theorem shows that \cite[Conjecture 4.1]{AS1993} is valid. Moreover, by the result \cite[Main Theorem]{AS1993} by Aschbacher and Smith and its extension to all odd primes in \cite[Theorem 1.1]{PS2022} by Piterman and Smith, we conclude the following result.

\begin{introtheorem}\label{thm:Quillen}
Let $G$ be a finite group and $p$ be an odd prime such that $O_p(G)=1$. Then $\widetilde{H}_*(|\A_p(G)|;\QQ)\neq 0$. In particular, Quillen's conjecture holds for odd primes.
\end{introtheorem}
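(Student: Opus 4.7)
The plan is to combine the reduction theorem of Aschbacher--Smith with the unitary input established in Theorem~\ref{thm:main}. There is no new structural ingredient in Theorem~\ref{thm:Quillen} beyond these two sources, so the proof is a clean assembly rather than an independent argument.

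First I would recall the reduction of \cite[Corollary, p.~475]{AS1993}: if $G$ is a finite group with $O_p(G)=1$ and $p$ is odd, then $\widetilde H_*(|\A_p(G)|;\QQ)\neq 0$, provided that every simple group of Lie type in odd characteristic that can appear as a section in a suitable centralizer $C_G(B)$ (with $B$ a specific elementary abelian $p$-subgroup) satisfies the $\mathrm{QD}_p$ property, i.e.\ has non-vanishing top-dimensional rational homology of the poset $\A_p$. As originally stated, this reduction left a short list of exceptional simple groups for which $\mathrm{QD}_p$ was unverified, and in \cite[Theorem~1.1]{PS2022} Piterman and Smith extended the reduction to all odd primes so that, after their work, only the unitary groups $\PSU_n(q)$, $\PGU_n(q)$, and the extensions of $\PGU_n(q)$ by field automorphisms, under the hypotheses of Theorem~\ref{thm:main}, remained as outstanding cases.

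Next I would invoke Theorem~\ref{thm:main}, which asserts that $H_{r-1}(|\A_p(G)|;\ZZ)\neq 0$ with $r=\rk_p(G)$ for exactly those remaining groups. Since non-vanishing of integral top-dimensional homology implies non-vanishing of rational top-dimensional homology, Theorem~\ref{thm:main} supplies the missing $\mathrm{QD}_p$ verification needed by the Aschbacher--Smith--Piterman--Smith machinery. Feeding this in, for every finite group $G$ with $O_p(G)=1$ and $p$ odd one obtains $\widetilde H_*(|\A_p(G)|;\QQ)\neq 0$, from which $|\A_p(G)|$ is not contractible, yielding Quillen's conjecture.

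The hard part is not this assembly but Theorem~\ref{thm:main} itself, whose proof occupies the remainder of the paper via the new geometric construction of a top-dimensional cycle using braid-relation elements in $\SU_n(q)$ and the Coxeter complex of $S_{r+1}$. The one bookkeeping point I would double-check is that the excluded configuration $(p,q)=(3,8)$ in the field-automorphism case of Theorem~\ref{thm:main} is either not invoked as a required $\mathrm{QD}_p$ input by \cite[Theorem~1.1]{PS2022} or is otherwise absorbed by the hypotheses of that reduction; this is a verification of scope rather than a conceptual obstacle.
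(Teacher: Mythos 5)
Your proposal is correct and matches the paper's own argument: Theorem~\ref{thm:Quillen} is deduced exactly by feeding the $\Q\D_p$ verification for the unitary groups from Theorem~\ref{thm:main} (noting that nonzero top-dimensional integral homology gives nonzero rational homology, since top-dimensional homology is free) into the Aschbacher--Smith reduction \cite[Corollary, p.~475]{AS1993} as extended to all odd primes by \cite[Theorem 1.1]{PS2022}. Your closing scope check on the excluded case $(p,q)=(3,8)$ is the right point to verify against the hypotheses of that reduction, and it is not a conceptual gap.
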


There are several reasons that explain why these techniques do not cover yet the case $p=2$ \label{intro:p=2_detail}. First, Theorems \ref{thm:mainPGU} and \ref{thm:mainPSU} and most of the aforementioned works that establish the Quillen dimension property for odd $p$ in various cases do not immediately extend to $p=2$. Some obstructions to handle are that the description of $p$-ranks,  maximal elementary abelian $p$-subgroups, and outer automorphism groups, are more involved for $p=2$ than for odd $p$, see \cite[Section 2]{AS1993}. Secondly, a dichotomy similar to the one introduced above for odd $p$ is not yet completed for $p=2$, in particular, a so-called $\Q\D$-list of simple groups possibly failing the Quillen dimension property, cf. \cite[Theorem 3.1]{AS1993}, is still lacking. As mentioned above, relevant progress in this direction has been made by Piterman and Smith in \cite{PS2022}, and other elimination results, besides the Quillen dimension property, are needed to prepare this missing list.

\textbf{Preview of the construction of the non-trivial homology class of Theorem \ref{thm:mainPGU}:} Given an elementary abelian subgroup $E$ of rank $\rk_p(G)$, a simplicial chain $\barsub_E$ is constructed that corresponds to the barycentric subdivision of an $(\rk_p(G)-1)$-simplex with $E$ as barycenter, see Definition \ref{def:ZEa} and Remark \ref{rmk:barycentric_subdivision} for an illustration. The goal is then to find a subset $X$ of $G$ such that a linear combination of the $X$-conjugates of $\barsub_E$ is a cycle, see Definition \ref{def:ZGXA} and Theorem \ref{thm:noncontractiblemoregeneral}. For the case of $\PGU_n(q)$, we choose $E$ to be spanned by diagonal elements, and $X$ to contain permutation matrices as well as a quasi-reflection; see Subsection \ref{subsection:quasi-reflections} for the definition of quasi-reflections and Remark \ref{rmk:why_x_and_Y-+} for further details on these choices.  Geometrically, the resulting complex  is a triangulation of the sphere $S^{n-2}$, similar to the Coxeter complex of the symmetric group on $n$ letters and it has $n!$ $(n-2)$-simplices. This contrast with much of the previous literature, e.g., \cite[Theorem 11.2(b)]{Quillen1978}, \cite{Diaz2016}, \cite{DiazMazza2020} where the sphere $S^{n-2}$ is the standard $(n-1)$-folded join of $S^0$ and has $2^{n-1}$ $(n-2)$-simplices. See Figure \ref{fig:triangulationPSLPGUr=3} below. 

For the case of $\PGU_n(q)$ extended by field automorphisms of order $p$, we consider the triangulation of $S^{n-2}$ obtained for $\PGU_n(q^{1/p})$ and adjoin the field automorphism to $E$, obtaining in this way a triangulation of an $(n-1)$-dimensional disk. Then we conjugate this triangulation by a diagonal element that fixes its boundary but does not centralize the field automorphism. The resulting complex is a suspension or double cone on the triangulation of $S^{n-2}$ constructed for $\PGU_n(q^{1/p})$, with the field automorphism in an apex and a conjugate of it in the opposite apex. See Example \ref{example:n2_quasireflections} and the figures there.

\begin{figure}[h!]
\centering
\includegraphics{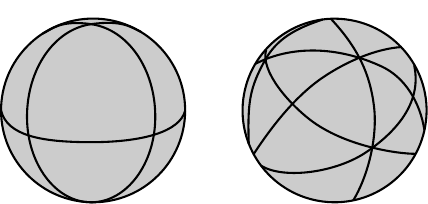}
\caption{Triangulations of the sphere $S^2$ constructed for $\PSL_4(q)$ in \cite{DiazMazza2020} (left), and for $\PGU_4(q)$ in Theorem \ref{thm:mainPGU} (right).}
\label{fig:triangulationPSLPGUr=3}
\end{figure}

\textbf{Outline of the paper:} We include group theoretical preliminaries in Section \ref{section:group_theoretical_preliminaries} and topological preliminaries in Section \ref{section:topological_preliminaries}. Theorem \ref{thm:mainPGU} is consequence of Theorems \ref{thm:QDp_for_PGUn} and \ref{thm:QDp_for_PGUnfield}, while the proof of Theorem \ref{thm:mainPSU} is to be found in Section \ref{section:TheoremB}.

\textbf{Acknowledgements:} I am grateful to the anonymous referee for the careful reading and for the many helpful comments. I am in debt to Stephen Smith and Kevin Piterman for their detailed feedback on several previous versions of this work.



\section{Group theoretical preliminaries}\label{section:group_theoretical_preliminaries}

We denote by $G$ a finite group and we fix some group-theoretical notation: If $g,h$ belong to $G$, we write $\ord(g)$ for the order of $g$, we denote conjugation by $g$ as $c_g(h)=\ls{g}h=ghg^{-1}$, and we define the commutator of $g$ and $h$ as $[g,h]=g^{-1}h^{-1}gh$. If $H\leq G$ is a subgroup of $G$, we denote its centralizer in $G$, its normalizer in $G$, its centre, and its commutator subgroup, as $C_G(H)$, $N_G(H)$, $Z(H)$, and $[H,H]$, respectively. The Classification of the Finite Simple Groups states that every finite simple group is isomorphic to one of the following, see \cite{GLSI},
\begin{enumerate}
\item[(i)] a cyclic group of prime order,
\item[(ii)] an alternating group of degree $n\geq 5$,
\item[(iii)] a finite simple group of Lie type,
\item[(iv)] one of the twenty-six sporadic groups.
\end{enumerate}
Only the groups in (i) are abelian, so that the groups in (ii,iii,iv) are the non-abelian simple groups. The projective special unitary group $\PSU_n(q)$, which is defined below and is the theme of this paper, is a finite group of Lie type, and it is simple for $n\geq 2$ except for a few exceptions for $n=2$, see below.

The components of a finite group are defined as its quasi-simple subnormal subgroups, and we recall that a finite group $H$ is quasi-simple if it satisfies that $H=[H,H]$ and that $H/Z(H)$ is simple. The special unitary group $\SU_n(q)$ is, in general, quasi-simple, see below. As discussed in the introduction, the analysis of the components of given finite group is a  cornerstone for the results of Aschbacher and Smith \cite{AS1993} and of Piterman and Smith \cite{PS2022}.

\subsection{General and special unitary groups.}\label{subsection:preliminaries_unitary_groups} Throughout the paper we assume $n\geq 2$, as for $n=1$ we would get abelian groups in the forthcoming constructions. Let $\GL_n(q)$ denote the general linear group over the field $\FF_q$ of $q=s^l$ elements, $s$ a prime, and let $\SL_n(q)$ denote the special linear group, which is the normal subgroup of $\GL_n(q)$ consisting of the matrices of determinant $1$. 

We denote by $I_n$ the $n\times n$ identity matrix, and recall that the field $\FF_{q^2}$ of $q^2=s^{2l}$ elements has the involution automorphism
\[
\alpha\mapsto \overline{\alpha}=\alpha^q,
\]
and that $\FF_q=\{\alpha\in \FF_{q^2}|\alpha=\overline{\alpha}\}$. Over an $n$-dimensional vector space over $\FF_{q^2}$, we may define the following Hermitian form employing the involution above, 
\[
f((\alpha_1,\ldots,\alpha_n),(\beta_1,\ldots,\beta_n))=\alpha_1\overline{\beta_1}+\cdots+\alpha_n\overline{\beta_n}.
\]
We define the general unitary group $\GU_n(q)$ as the subgroup of unitary matrices of $\GL_n(q^2)$ with respect to the form $f$,  i.e.,
\[
\GU_n(q)=\{x\in \GL_n(q^2) ~|~ x\overline{x}^\transpose=I_n\},
\]
where $\overline{x}$ is obtained from $x$ by applying $\overline{\cdot}$ to each entry and $^\transpose$ denotes transpose. Analogously, we define the special unitary group $\SU_n(q)$ as the subgroup of unitary matrices of $\SL_n(q^2)$.

We define $T$ as the subgroup of diagonal matrices of $\GU_n(q)$, i.e., elements of the form $\diag(\alpha_1,\ldots,\alpha_n)$ with $\alpha_i\overline{\alpha_i}=\alpha_i^{q+1}=1$. The centers of $\GU_n(q)$ and $\SU_n(q)$ consist of the diagonal matrices $\diag(\alpha,\ldots,\alpha)$ with $\alpha\overline{\alpha}=1$ and $\alpha\overline{\alpha}=\alpha^n=1$, respectively, giving the following descriptions as cyclic groups,
\[
Z(\GU_n(q))=C_{q+1} \text{ and }Z(\SU_n(q))=C_{(n,q+1)},
\]
where $(n,q+1)$ denotes greatest common divisor. The quotient groups 
\[
\PGU_n(q)=\GU_n(q)/Z(\GU_n(q))\text{ and }\PSU_n(q)=\SU_n(q)/Z(\SU_n(q))
\]
are the projective general unitary group and the projective special linear group, respectively. Moreover, $\SU_n(q)$ is perfect with simple quotient $\PSU_n(q)$, 
\[
Z(\SU_n(q))\to \SU_n(q)\to \PSU_n(q)=\SU_n(q)/Z(\SU_n(q)),
\]
i.e., it is quasi-simple, but for the cases $\SU_2(2)$, $\SU_2(3)$, and $\SU_3(2)$, see \cite[Chapter 11]{Grove}. In fact, the groups 
\begin{equation}\label{equ:small_group_exceptions}
\text{$\PSU_2(2)$, $\PSU_2(3)$, $\PSU_3(2)$}
\end{equation}
have $O_p(G)\neq 1$ for $p=3,2,3$ respectively (note this is the only prime dividing $q+1$), and the same happens to the groups $\PGU_2(2)$, $\PGU_2(3)$, and $\PGU_3(2)$, and to the extensions of $\PSU_3(8)$ and of $\PGU_3(8)$ by field automorphisms of order $3$ with $p=3$.  Thus, by ``conical contractibility'' \cite[Proposition 2.4]{Quillen1978}, $\Q\D_p$ cannot hold for these groups, explaining the cases excluded from Theorems \ref{thm:mainPGU} and \ref{thm:mainPSU}. To finish this subsection, we recall that
\begin{equation}\label{equ:PSU-PGU-cyclic}
\PGU_n(q)/\PSU_n(q)=C_{(n,q+1)}.
\end{equation}

\subsection{Automorphisms of unitary groups.} \label{subsection:automorphisms_unitary:groups} We define below, following \cite[Definition 2.5.13]{GLSIII}, the \emph{diagonal}, \emph{inner-diagonal}, and \emph{field automorphisms} of $\PSU_n(q)$, and we will employ the same names for the corresponding induced automorphisms in $\PGU_n(q)$.

Thus, we declare the \emph{diagonal} automorphisms of $\PSU_n(q)$ (and of $\PGU_n(q)$) as those induced by conjugation by elements from $\PGU_n(q)\setminus \PSU_n(q)$, and the \emph{inner-diagonal} automorphisms of $\PSU_n(q)$ (and of $\PGU_n(q)$) as those induced by conjugation by elements from $\PGU_n(q)$. In addition, the Frobenius map induces the automorphism of $\SU_n(q)$ (and of $\GU_n(q)$) that acts as $\lambda\mapsto \lambda^s$ on the matrix entries, and descends to an automorphism $\phi$ of $\PSU_n(q)$ (and of $\PGU_n(q)$). We define, see \cite[Definition 2.5.13(c1)]{GLSIII}, the \emph{field automorphisms} of $\PSU_n(q)$ (and of $\PGU_n(q)$) as the $\PGU_n(q)$-conjugates of a power of $\phi$ of odd order. In particular, with these definitions, all cyclic subgroups of field automorphisms of $\PGU_n(q)$ of the same odd order are $\PGU_n(q)$-conjugated. We are interested in the case that there exists a field automorphism of odd order $p$.

\begin{defn}\label{def:Phi}
If $q=s^{pl}$ for $p$ an odd prime, we set $\Phi=\phi^{2l}$, which is an order $p$ field automorphism of $\PSU_n(q)$ (and of $\PGU_n(q)$).
\end{defn}

We finish this subsection by recalling the $p$-ranks and $p$-extensions of $\PSU_n(q)$ as well as the $p$-ranks of $\PGU_n(q)$ and $\PGU_n(q)\langle \Phi\rangle$. The next result will be employed in Section \ref{section:TheoremB}, and note that the hypotheses below that $p$ is odd and that $(p,q)\neq (3,2)$ exclude the non-simple groups in Equation \eqref{equ:small_group_exceptions}. We recall that, for an integer $l$, $(l)_p$ denotes the largest power of $p$ dividing $l$.

\begin{prop}\label{prop:pextensions_PSUnq}
Assume that $n\geq 2$, $p$ is odd, $p\mid(q+1)$, and $(p,q)\neq (3,2)$. Then the $p$-ranks of $\PSU_n(q)$ and $\PGU_n(q)$ are as follows,
\begin{align*}
\rk_p(\PSU_n(q))&=\begin{cases}
n-2&\text{ if $(n)_p\geq(q+1)_p$ and $n>3$,}\\
n-1&\text{ otherwise.}\\
\end{cases}\\
\rk_p(\PGU_n(q))&=n-1.
\end{align*}
In addition, $\rk_p(\PGU_n(q)\langle \Phi\rangle)=n$ and, for $n\geq 3$, the $p$-extensions of $L=\PSU_n(q)$ are of the form $LB$ and have the $p$-ranks indicated below, with
\begin{enumerate}
\item[(i)]  $B=1$, and hence $LB=L$ and $\rk_p(LB)=\rk_p(\PSU_n(q))$, or
\item[(ii)] $B$ is cyclic generated by a field automorphism of order $p$ and $\rk_p(LB)=\rk_p(\PSU_n(q))+1$, or
\item[(iii)] $B$ is cyclic generated by a diagonal automorphisms of order $p$ and $\rk_p(LB)=n-1$, or
\item[(iv)]  $B$ has $p$-rank $2$, it is generated by a field and a diagonal automorphism of order $p$, and $\rk_p(LB)=n$.
\end{enumerate}
\end{prop}
\begin{proof}
The values for the $p$-ranks in the display of the statement are obtained from \cite[I.10-6(1)]{GL} and  \cite[I.10-2(2)]{GL}, see also \cite[Section 4]{AS1993}. For the $p$-extensions, recall that, for $n\geq 3$, we have, by  \cite[Theorem 2.5.12]{GLSIII}, that
\[
\Aut(\PSU_n(q))=\langle \PGU_n(q),\phi\rangle,
\]
but for $\PSU_3(2)$, which is included in the non-simple exceptions of Equation \eqref{equ:small_group_exceptions}. Quotienting by the inner automorphism group, we get the outer automorphism group,
\[
\Out(\PSU_n(q))=C_{(n,q+1)}\rtimes C_{2l},
\]
where $C_{(n,q+1)}$ is isomorphic the quotient group in Equation \eqref{equ:PSU-PGU-cyclic}, $C_{2l}$ is generated by $\phi$, and the action for this semi-direct product is described in \cite[Theorem 2.5.12(g)]{GLSIII}. This gives rise to the types of $p$-extensions in points (i-iv) of the statement by \cite[Proposition 4.9.1(d)]{GLSIII}. Finally, for the $p$-ranks of the $p$-extensions, we use that $\PSU_n(q)\subseteq \PGU_n(q)$ and \cite[Section 4]{AS1993}.
\end{proof}

\subsection{Quasi-reflections in $\GU_n(q)$.}\label{subsection:quasi-reflections}

For $f$ the Hermitian form introduced in Subsection \ref{subsection:preliminaries_unitary_groups}, we say that a vector $v$ is \emph{isotropic} if $f(v,v)=0$ and that it is \emph{anisotropic} otherwise.  Then, an anisotropic vector $v$ and an element $\alpha\in \FF^*_{q^2}$ with $\alpha\overline{\alpha}=1$, determine an element of $\GU_n(q)$, called \emph{quasi-reflection}, as follows, see \cite[p.94]{Grove},
\[
X_{v,\alpha}(u)=u+(\alpha-1)\frac{f(u,v)}{f(v,v)}v.
\]
In the next result, we construct a quasi-reflection that will be used in Section \ref{section:PGU}.

\begin{prop}\label{prop:a_quasi_reflection}
If $n\geq 2$ and $q\geq 3$, then there exists a quasi-reflection $x\in \GU_n(q)$, such that  $x$ is not a monomial matrix and that, for any  diagonal element $t=\diag(\alpha_1,\ldots,\alpha_n)\in T$, $\ls{x}{t}$ is non-diagonal if $\alpha_1\neq \alpha_2$ and $\ls{x}t=t$ if $\alpha_1=\alpha_2$.
\end{prop}
\begin{proof}
We will define the element $x$ as a quasi-reflection of the form 
\[
x=X_{v,\gamma},
\]
where $v=(1,\beta,0,\ldots,0)^\transpose$ and the scalars $\beta,\gamma\in \FF^*_{q^2}$ are chosen below subject to
\begin{equation}\label{equ:QR-beta_gamma_basic}
\gamma\overline{\gamma}=1\text{ and }1+\beta\overline{\beta}\neq 0.
\end{equation}
It is straightforward that $x$ defined in this way is the block diagonal matrix 
\begin{equation}\label{equ:x_in_2x2_square}
x=\diag(X_{(1,\beta)^\transpose,\gamma},I_{n-2}).
\end{equation}
In particular, it is enough to solve the case $n=2$, and we do so below.

For arbitrary values $\beta,\gamma\in \FF^*_{q^2}$ satisfying Equation \eqref{equ:QR-beta_gamma_basic}, a straightforward computation shows that
\[
X_{(1,\beta)^\transpose,\gamma}=\frac{1}{\zeta}\begin{pmatrix}
\gamma+\beta\overline{\beta} & (\gamma-1)\overline{\beta} \\ (\gamma-1)\beta & 1+\gamma\beta\overline{\beta}\end{pmatrix},
\]
where $\zeta=1+\beta\overline{\beta}$, as well as that $\ls{\left(X_{(1,\beta)^\transpose,\gamma}\right)}\diag(\alpha_1,\alpha_2)$ equals
\[
\frac{1}{\zeta^2}\begin{pmatrix}
\alpha_1\zeta^2+(\alpha_2-\alpha_1)(\gamma-1)(\overline{\gamma}-1)\beta\overline{\beta} & (\alpha_1-\alpha_2)(\overline{\gamma}-1)\overline{\beta}(\gamma+\beta\overline{\beta})\\
(\alpha_1-\alpha_2)(\gamma-1)\beta(\overline{\gamma}+\beta\overline{\beta}) & \alpha_2\zeta^2+(\alpha_1-\alpha_2)(\gamma-1)(\overline{\gamma}-1)\beta\overline{\beta}
\end{pmatrix}.
\]
Thus, for the conclusions of the proposition to hold, we need to choose $\beta,\gamma\in \FF_{q^2}$ satisfying Equation \eqref{equ:QR-beta_gamma_basic} together with
\begin{equation}\label{equ:QR-beta_gamma_advanced}
\gamma\neq 1\text{, }\beta\neq 0\text{, and }\gamma+\beta\overline{\beta}\neq 0.
\end{equation}
We proceed as follows: First, recall that the norm map, $\FF^*_{q^2}\to \FF^*_q$, $\alpha\mapsto \alpha\overline{\alpha}$, is surjective. Then, as $q\geq 3$ by hypothesis, there exist $q-1\geq 2$ elements in the co-domain of the norm map and we can choose $\beta\in \FF^*_{q^2}$ with $\beta\overline{\beta}\neq -1$. In particular, $\beta\neq 0$, $1+\beta\overline{\beta}\neq 0$, and we are left with choosing $\gamma\in \FF_{q^2}$ such that
\[
\gamma\overline{\gamma}=1\text{ and }\gamma\neq 1,-\beta\overline{\beta}.
\] But the equation $\gamma\overline{\gamma}=\gamma^{q+1}=1$ has $q+1\geq 4$ roots and thus we can avoid the two values $1,-\beta\overline{\beta}$.
\end{proof}

\begin{remark}\label{remark:QR_for_odd_q}
If $q$ is odd and $q\geq 5$, then we may choose $x$ in Proposition \ref{prop:a_quasi_reflection} to be a proper reflection, i.e., to have $\gamma=-1$. In fact, with this choice of $\gamma$, the restrictions in Equations \eqref{equ:QR-beta_gamma_basic} and \eqref{equ:QR-beta_gamma_advanced} amount to choose $\beta$ such that
\[
\beta\neq 0, \beta\overline{\beta}\neq 1,-1.
\]
As the codomain of the norm map has size $q-1\geq 4$, this choice is possible.
\end{remark}

\subsection{Symmetric group and certain subsets of it}\label{subsection:symmetric_groups} We denote by $S_n$ the symmetric group on $n$ symbols, i.e., the group of permutations of the set $\{1,\ldots,n\}$. We write $s_i$ for the transposition $(i,i+1)$, $i=1,\ldots,n-1$, and we recall that 
\begin{equation}\label{equ:relations_S_n}
s_is_{i+1}s_i=s_{i+1}s_is_{i+1}\text{ , }s_is_j=s_js_i\text{ if $|i-j|>1$, }s_i^2=1.
\end{equation}
Recall that the sign map $\sgn\colon S_n\to \{-1,+1\}$ is described as follows,
\begin{equation}\label{equ:sign_map_symmetric_group}
\sgn(s)=\begin{cases} +1,&\text{if $s$ is an even permutation,}\\
-1,&\text{if $s$ is an odd permutation,}
\end{cases}
\end{equation}
and that an element of  $S_n$ is called even (resp. odd) if it can be expressed as a product of an even (resp. odd) number of generators $s_i$'s. Below we present a canonical form for elements of $S_n$ in which $s_{n-1}$ appears at most once, see \cite[Section 4.1, Lemma 4.3, Exercise 4.1.3]{KasselTuraev}.

\begin{prop}[Normal form in $S_n$]\label{prop:normal_form_Sn}
For each permutation $w\in S_n$, there exist unique elements $w_i\in \{1,s_i,s_is_{i-1},\ldots,s_is_{i-1}\cdots s_2s_1\}$ for $i=1,\ldots,n-1$ such that
\[
w=w_1w_2\cdots w_{n-1}.
\]
\end{prop}

The next subsets of $S_n$ will play a crucial role in Section \ref{section:PGU}.

\begin{defn}\label{def:Sn+-partialSn+-}
We define $S^+_n\subseteq S_n$ ($S^-_n\subseteq S_n$) as the subset of permutations whose normal form (Proposition \ref{prop:normal_form_Sn}) have $w_1=s_1$ ($w_1=1$). In addition, we define $\partial S^+_n\subseteq S^+_n$ ($\partial S^-_n\subseteq S^-_n$) as the subset of permutations $w\in S^+_n$ ($w\in S^-_n$) such that $ws_j\in S^-_n$ ($ws_j\in S^+_n$) for some $1\leq j\leq n-1$.
\end{defn}

\begin{exa}\label{exa:Sn+-partialSn+-}
For $n=2$, we have
\[
S_2=\{1,s_1\}\text{, }S^+_2=\{s_1\}\text{, }S^-_2=\{1\}\text{, }\partial S^+_2=\{s_1\}\text{, and }\partial S^-_2=\{1\},
\]
and, for $n=3$, we have 
\[
S_3=\{1,s_2,s_2s_1,s_1,s_1s_2,s_1s_2s_1\}\text{, }S_3^+=\{s_1,s_1s_2,s_1s_2s_1\}\text{, }S_3^-=\{1,s_2,s_2s_1\}
\]
and
\[
\partial S_3^+=\{s_1,s_1s_2s_1\}\text{, }\partial S_3^-=\{1,s_2s_1\}.
\]
\end{exa}

The next result exhibits intriguing combinatorial properties of the subsets of $S_n$ defined above. These features will be interpreted geometrically in Section \ref{section:PGU}, explaining also the naming chosen in Definition \ref{def:Sn+-partialSn+-}; see Remark \ref{rmk:why_x_and_Y-+}. Note that point (b) below implies that the value $j$ satisfying the hypothesis there, as well as the value $j$ in Definition \ref{def:Sn+-partialSn+-}, if exists, is unique.

\begin{prop}\label{prop:properties_of_T} The following hold,
\begin{enumerate}
\item[(a)] $S_n=S_n^-\cup S_n^+$, $S_n^-\cap S_n^+=\emptyset$, $S_n^+=s_1S_n^-$, and $\partial S^+_n=s_1\partial S^-_n$.
\item[(b)] If $w\in \partial S^-_n$ with $ws_j\in S^+_n$, then $w(\{j,j+1\})=\{1,2\}$.
\item[(c)] If $w\in S_n^-$ then, for all $2\leq k\leq n$, we have
\begin{equation}\label{equ:properties_of_T_part_d}
w(j)=j\text{ for $k+1\leq j\leq n$ }\Rightarrow w(k)\neq 1.
\end{equation}
\end{enumerate}
\end{prop}
\begin{proof}
The first three parts of point (a) clearly follow from Proposition \ref{prop:normal_form_Sn}. On the other hand, by Definition \ref{def:Sn+-partialSn+-}, an element $s_1w\in S^+_n$ with $w\in S^-_n$ belongs to $\partial S^+_n$ if and only if $s_1ws_j\in S_n^-$ for some $1\leq j\leq n-1$, and an element $w\in S^-_n$ belongs to $\partial S^-_n$ if and only if $s_1ws_j\in S_n^-$ for some $1\leq j\leq n-1$. Hence, the last part of point (a) follows. Moreover, the hypothesis that $w\in \partial S_n^-$ in point (b), can be stated as follows, where we write $w=w_2\cdots w_{n-1}$ by Proposition \ref{prop:normal_form_Sn}, 
\begin{equation}\label{equ:properties_of_T_hypothesis_b_again}
s_1w_2\cdots w_{n-1}s_j\in S^-_n,
\end{equation}
and we prove point (b) by induction on $n$. The cases $n=2,3$ follow from Example \ref{exa:Sn+-partialSn+-} together with the following fact,
\[
(1)(\{1,2\})=(s_2s_1)(\{2,3\})=\{1,2\}.
\]
Thus, we assume that $n\geq 4$ and that point (b) is true for lesser values of $n$. We study $w_{n-1}$ and $j$ in Equation \eqref{equ:properties_of_T_hypothesis_b_again}: If $w_{n-1}=1$ and $j=n-1$, then
\[
s_1w_2\cdots w_{n-1}s_{n-1}=s_1w_2\cdots w_{n-2}s_{n-1}
\]
is in normal form and does not belong to $S^-_n$. If $w_{n-1}=1$ and $j<n-1$, then 
\[
s_1w_2\cdots w_{n-2}w_{n-1}s_{j}=s_1w_2\cdots w_{n-2}s_{j}\in S_{n-1}^-,
\]
and we may apply, by induction, that point (b) is true for $n-1$ and $w'=w_2\cdots w_{n-2}\in S_{n-1}^-$. Thus, $w'(\{j,j+1\})=\{1,2\}$, and then $w(\{j,j+1\})=w'(\{j,j+1\})=\{1,2\}$. Hence, we can assume that $1\neq w_{n-1}=s_{n-1}\cdots s_k$ with $1\leq k\leq n-1$. If $j<k-1$, then $[w_{n-1},s_j]=1$ by Equation \eqref{equ:relations_S_n}, and
\[
s_1w_2\cdots w_{n-2}w_{n-1}s_j=s_1w_2\cdots w_{n-2}s_jw_{n-1}\in S_{n-1}^-w_{n-1}.
\]
Thus, by the inductiom hypothesis for $n-1$ applied to $w'=w_2\cdots w_{n-2}\in S_{n-1}^-$, we have that $w'(\{j,j+1\})=\{1,2\}$, and then $w(\{j,j+1\})=w'(\{j,j+1\})=\{1,2\}$. If $j=k-1$, then
\[
s_1w_2\cdots w_{n-2}w_{n-1}s_{k-1}=s_1w_2\cdots w_{n-2}s_{n-1}\cdots s_ks_{k-1}
\]
is in normal form and does not belong to $S^-_n$. Analogously, if $j=k$, then
\[
s_1w_2\cdots w_{n-2}w_{n-1}s_{k}=s_1w_2\cdots w_{n-2}s_{n-1}\cdots s_ks_k=s_1w_2\cdots w_{n-2}s_{n-1}\cdots s_{k+1}
\]
is in normal form and not in $S^-_n$. Finally, if $k+1\leq j\leq n-1$, then
\begin{align*}
s_1w_2\cdots w_{n-2}w_{n-1}s_{j}&=s_1w_2\cdots w_{n-2}s_{n-1}\cdots s_ks_j\\
&=s_1w_2\cdots w_{n-2}s_{n-1}\cdots s_js_{j-1}s_j\cdots s_k\\
&=s_1w_2\cdots w_{n-2}s_{n-1}\cdots s_{j-1}s_{j}s_{j-1}\cdots s_k\\
&=s_1w_2\cdots w_{n-2}s_{j-1}s_{n-1}\cdots s_{j}s_{j-1}\cdots s_k\\
&=s_1w_2\cdots w_{n-2}s_{j-1}w_{n-1}\in S_{n-1}^-w_{n-1},
\end{align*}
where we have used Equation \eqref{equ:relations_S_n}. Thus, by the induction hypothesis for $n-1$ applied to $w'=w_2\cdots w_{n-2}\in S_{n-1}^-$, we have $w'(\{j-1,j\})=\{1,2\}$, and then $w(\{j,j+1\})=w'(\{j-1,j\})=\{1,2\}$.

Finally, for point (c), and if we write $w=w_1w_2\cdots w_{n-1}$, it is easy to see that the hypothesis in Equation \eqref{equ:properties_of_T_part_d} for $2\leq k\leq n$ is equivalent to $w_{n-1}=\cdots=w_k=1$. Moreover, if this condition holds, then $w(k)=1$ if and only if $w_{k-1},\ldots,w_2,w_1$ are all non-trivial, so that point (c) follows.
\end{proof}

\section{Topological preliminaries.} \label{section:topological_preliminaries} 

We regard the topological realization $|\A_p(G)|$ as a simplicial complex, so that it has one ordered $n$-simplex for each chain of length $n$ in $\A_p(G)$,
\[
P_0<\cdots<P_n,\text{ where $P_i\in \A_p(G)$ for $0\leq i\leq n$.}
\]
We note that the maximal faces of such an $n$-simplex are the ordered $(n-1)$-simplices
\[
P_0<\cdots<\widehat{P_i}<\cdots<P_n
\]
for $i=0,\ldots,n$, and that vertices of $|\A_p(G)|$ correspond to subgroups $P_0\in \A_p(G)$.

For a \emph{unital commutative ring} $R$, we may study the reduced homology of the topological space $|\A_p(G)|$ with trivial coefficients in $R$, denoted as $\widetilde H_*(|\A_p(G)|;R)$, via simplicial homology. Thus, these homology groups are then obtained as the homology of the augmented simplicial chain complex $C_*(|\A_p(G)|;R)$, that we next describe: $C_n(|\A_p(G)|;R)$ is the free $R$-module with basis the ordered $n$-simplices $P_0<\ldots <P_n$, and the differential 
\[
C_n(|\A_p(G)|;R)\stackrel{d}\longrightarrow C_{n-1}(|\A_p(G)|;R)
\]
is given, for $n\geq 1$, by 
\[
d=\sum_{j=0}^n (-1)^id_i,
\]
where $d_i$ is the $R$-linear map with 
\[
d_i(P_0<\cdots <P_n)=P_0<\cdots<\widehat{P_i}<\cdots<P_n,
\]
and, for $n=0$, by $d(P_0)=1\in R$ for $P_0\in \A_p(G)$, where $C_{-1}(|\A_p(G)|;R)=R$.

\subsection{Some homology classes.} In this subsection, we introduce a method to explicitly construct certain classes in the reduced homology groups of $|\A_p(G)|$, following \cite{Diaz2016} and \cite{DiazMazza2020}. We start with the following notions of tuple and signature, which will be employed to algebraically manipulate certain barycentric subdivision (see Definition \ref{def:ZEa} and Remark \ref{rmk:barycentric_subdivision}) as well as  the boundary map of linear combinations of these subdivisions (see Theorem \ref{thm:noncontractiblemoregeneral}).

\begin{defn}\label{def:sequences}
Let $r$ be a positive integer. We define a \emph{tuple for $r$} to be an ordered sequence of $r-1$ elements $\ii=[i_1,\ldots,i_{r-1}]$ with $1\leq i_j\leq r$
and no repetition. By $\tuples^r$ we denote the set of all
tuples for $r$ and, for $\ii=[i_1,\ldots,i_{r-1}]\in \tuples^r$, we define
the {\em signature of $\ii$} as
\[
\sgn(\ii)=(-1)^{n+m}\text{, where}
\]
\begin{itemize}
\item $n$ is the number of transpositions
we need to apply to the tuple $\ii$ to rearrange it in increasing order
$[j_1,\ldots,j_{r-1}]$, and\item $m$ is the number of positions in which
$[j_1,\ldots,j_{r-1}]$ differ from $[1,\ldots,r-1]$.\end{itemize}
\end{defn}

Note that in Definition~\ref{def:sequences}, the number $n$
of transpositions is not uniquely defined, but its parity is. For instance, if $\ii=[1,4,2]\in \tuples^4$, then $n=1$, since we need to apply $(2,4)$
to reorder $\ii$ as $[1,2,4]$, and $m=1$, since $[1,2,4]$ differs from
$[1,2,3]$ only in one place. Thus $\sgn(\ii)=1$.

\begin{prop}\label{prop:right_signature}
Let $\ii=[i_1,\ldots,i_{r-1}]$, $\jj=[j_1,\ldots,j_{r-1}]\in \tuples^r$ and define $1\leq i,j\leq r$ by $\{i_1,\ldots,i_{r-1},i\}=\{j_1,\ldots,j_{r-1},j\}=\{1,\ldots,r\}$. Let $\sigma$ be the permutation on $r$ letters with $\sigma(j_l)=i_l$ for $i=1,\ldots,r-1$ and with $\sigma(j)=i$. Then $\sgn(\sigma)=\sgn(\jj)\sgn(\ii)$.
\end{prop}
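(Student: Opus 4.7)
The plan is to lift each tuple to a genuine permutation in $S_r$ and then invoke the multiplicativity of the sign homomorphism. To a tuple $\ii=[i_1,\ldots,i_{r-1}]$ with complement $i$, I associate the permutation $\tau_\ii\in S_r$ defined by $\tau_\ii(l)=i_l$ for $l=1,\ldots,r-1$ and $\tau_\ii(r)=i$, and similarly $\tau_\jj$. The defining conditions $\sigma(j_l)=i_l$ and $\sigma(j)=i$ then say precisely that $\sigma\circ\tau_\jj=\tau_\ii$, i.e.\ $\sigma=\tau_\ii\tau_\jj^{-1}$. Granted the identity $\sgn(\tau_\ii)=\sgn(\ii)$ (and the analogous one for $\jj$), the claim follows from $\sgn(\sigma)=\sgn(\tau_\ii)\sgn(\tau_\jj^{-1})=\sgn(\ii)\sgn(\jj)$.

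The bulk of the work is therefore to prove $\sgn(\tau_\ii)=\sgn(\ii)$ for an arbitrary tuple $\ii$. I would do this by factoring $\tau_\ii=\tau_{\mathrm{sort}}\circ s$, where $s\in S_{r-1}\subset S_r$ acts on the first $r-1$ positions (fixing $r$) and realizes the rearrangement of $[i_1,\ldots,i_{r-1}]$ into its increasing form $[a_1,\ldots,a_{r-1}]$, and $\tau_{\mathrm{sort}}$ is defined by $l\mapsto a_l$ for $l<r$ and $r\mapsto i$. Since $s$ is a product of $n$ transpositions, $\sgn(s)=(-1)^n$. On the other hand, the sorted tuple equals $[1,\ldots,i-1,i+1,\ldots,r]$, so $\tau_{\mathrm{sort}}$ is recognizably the cycle $(i,i+1,\ldots,r)$ of length $r-i+1$ (the identity when $i=r$), of sign $(-1)^{r-i}$. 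Comparing with the definition of $m$ as the number of disagreements between $[a_1,\ldots,a_{r-1}]$ and $[1,\ldots,r-1]$ yields $m=r-i$ uniformly, so $\sgn(\tau_\ii)=(-1)^{n+m}=\sgn(\ii)$, as required.

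The main obstacle is not conceptual but notational: one has to carefully match the ad-hoc quantities $n$ and $m$ appearing in Definition \ref{def:sequenceandsignature} with the honest cycle structure of $\tau_\ii$, and in particular verify that the cycle $(i,i+1,\ldots,r)$ accounts exactly for the $m$ disagreements in the sorted tuple. Once the dictionary $\ii\leftrightarrow\tau_\ii$ is secured, the proposition reduces to the familiar fact that the sign is a group homomorphism on $S_r$.
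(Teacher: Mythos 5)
Your proposal is correct and follows essentially the same route as the paper: your lift $\tau_\ii$ is (up to inversion) the paper's sorting permutation $\sigma_\ii$, your factorization into a sorting permutation of sign $(-1)^n$ composed with the cycle $(i,i+1,\ldots,r)$ of sign $(-1)^{r-i}=(-1)^m$ matches the paper's decomposition $\sigma_\ii=(r,i)\circ\cdots\circ(i+1,i)\circ\widehat\sigma_\ii$, and your final step $\sigma=\tau_\ii\tau_\jj^{-1}$ is the paper's identity $\sigma_\ii\sigma\sigma_\jj^{-1}=1$ combined with multiplicativity of the sign.
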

\begin{proof}
Define $\widehat \sigma_\ii$ as the permutation on $r$ letters that arranges $[i_1,\ldots,i_{r-1}]$ in increasing order $[1,2,\ldots,i-1,i+1,\ldots,r]$. Then $\sgn(\ii)=\sgn(\widehat\sigma_\ii)(-1)^{r-i}$ by Definition \ref{def:sequences}. Define $\sigma_\ii$ as the permutation on $r$ letters that arranges $[i_1,\ldots,i_{r-1},i]$ in increasing order $[1,\ldots,i-1,i,i+1,\ldots,r]$. Then we clearly have
\[
\sigma_\ii=(r, i)\circ(r-1,i)\circ\cdots\circ(i+1,i)\circ \widehat \sigma_\ii.
\]
Hence, $\sgn(\sigma_\ii)=\sgn(\widehat\sigma_\ii)(-1)^{r-i}=\sgn(\ii)$.
Analogously, we define $\sigma_\jj$ with $\sgn(\jj)=\sgn(\sigma_\jj)$. Then it is clear that 
$\sigma_\ii\sigma\sigma^{-1}_\jj=1$ and hence the result.
\end{proof}

\begin{defn}\label{def:sequencesubspace}
Let $E=\langle e_1,\dots,e_r \rangle$ be an elementary abelian $p$-subgroup of
$G$ of rank $r$. For each tuple $\ii=[i_1,\ldots,i_{r-1}]\in \tuples^r$ and each $0\leq l<r$, set
\[
E_{[i_1,\ldots,i_l]}=\langle e_k\ \colon \ 1\leq k\leq r,\ k\not\in\{i_1,\ldots,i_l\}\rangle,
\]
for the subgroup of $E$ generated by the $e_k$'s not in the first $l$ entries of $\ii$.
\end{defn}

In the definition above, for $l=0$, we have $E_{\emptyset}=E$, for $l=1$, we obtain the hyperplane $E_{[i_1]}$ of $E$, that we also denote by $E_{i_1}$, and, for $l=r-1$,  $E_{[i_1,\ldots,i_{r-1}]}$ is a cyclic subgroup of order $p$. Next, we define certain simplicial chains in $C_*(|\A_p(G)|;R)$.

\begin{defn}\label{def:ZEa}
Let $E=\langle e_1,\dots,e_r \rangle$ be an elementary abelian $p$-subgroup of $G$ of rank $r$. For $\ii=[i_1,\ldots,i_{r-1}]\in \tuples^r$, we define the
$(r-1)$-simplex 
\[
\sigma_{\ii}=\big(E_{[i_1,\ldots,i_{r-1}]}<E_{[i_1,\dots,i_{r-2}]}<
\cdots<E_{i_1}<E\big)\in|\A_p(E)|,
\]
and the chain
\[
\barsub_E=\sum_{\ii\in \tuples^r}\sgn(\ii)\sigma_{\ii}
\qbox{in}C_{r-1}(|\A_p(E)|;R).
\]
\end{defn}

Its differential $d(\barsub_E)\in C_{r-2}(|\A_p(E)|;R)$ is described in the next result, cf.  \cite[Proposition 3.2]{Diaz2016}, \cite[Proposition 3.2]{DiazMazza2020}.
\begin{defn}\label{def:flagtau}
With the notation above and for $\ii=[i_1,\ldots,i_{r-1}]\in \tuples^r$, we define the
$(r-2)$-simplex 
\[
\tau_{\ii}=\big(E_{[i_1,\ldots,i_{r-1}]}<E_{[i_1,\dots,i_{r-2}]}<\cdots<E_{i_1}\big).
\]
\end{defn}

\begin{prop}\label{prop:dz}
With the above notation,
\[
d(\barsub_E)=(-1)^{r-1}\sum_{\ii\in \tuples^r}\sgn(\ii)\tau_{\ii}.
\]
\end{prop}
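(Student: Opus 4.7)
The plan is to expand the simplicial boundary of $\barsub_E$ directly and show that only the top-vertex deletion survives, all other face contributions cancelling in pairs indexed by a natural involution on $\tuples^r$.

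Concretely, for each $\ii\in\tuples^r$ write $\sigma_\ii^{(k)}$ for the $(r-2)$-face of $\sigma_\ii$ obtained by deleting the vertex in position $k$ (counting from $k=0$ at the smallest subgroup up to $k=r-1$ at $E$). The vertex in position $k$ is $E_{[i_1,\ldots,i_{r-1-k}]}$ for $0\le k\le r-2$, and $E$ for $k=r-1$. Then
\[
d(\barsub_E)=\sum_{\ii\in \tuples^r}\sgn(\ii)\sum_{k=0}^{r-1}(-1)^k\sigma_\ii^{(k)}.
\]
The contribution from $k=r-1$ is exactly $(-1)^{r-1}\sum_{\ii}\sgn(\ii)\tau_\ii$, which matches the claimed right-hand side, so the task reduces to showing that the contributions from $0\le k\le r-2$ sum to zero.

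For a middle index $1\le k\le r-2$, set $l=r-1-k$, so the deleted vertex is $E_{[i_1,\ldots,i_l]}$; I pair $\ii$ with the tuple $\ii'$ obtained by transposing the entries $i_l$ and $i_{l+1}$. Because $E_{[i_1,\ldots,i_s]}$ depends only on the set $\{i_1,\ldots,i_s\}$, the two faces $\sigma_\ii^{(k)}$ and $\sigma_{\ii'}^{(k)}$ are literally the same simplex. Applying Proposition~\ref{prop:right_signature} to $\ii$ and $\ii'$ (which have the same missing index) yields the permutation $\sigma=(i_l,i_{l+1})$, hence $\sgn(\ii')=-\sgn(\ii)$, and the two contributions cancel. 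For the bottom face $k=0$, the deleted vertex $E_{[i_1,\ldots,i_{r-1}]}$ has rank one and the remaining chain depends only on $(i_1,\ldots,i_{r-2})$; I pair $\ii=[i_1,\ldots,i_{r-2},a]$ with $\jj=[i_1,\ldots,i_{r-2},b]$, where $\{a,b\}=\{1,\ldots,r\}\setminus\{i_1,\ldots,i_{r-2}\}$. Once again $\sigma_\ii^{(0)}=\sigma_\jj^{(0)}$, and Proposition~\ref{prop:right_signature} now produces the transposition $\sigma=(a,b)$, giving $\sgn(\jj)=-\sgn(\ii)$ and cancellation.

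The involutions pairing $\ii\leftrightarrow\ii'$ (middle) and $\ii\leftrightarrow\jj$ (bottom) are clearly fixed-point free on $\tuples^r$, so the summation splits into cancelling pairs in each fixed $k<r-1$; no tuple is left unmatched. Summing over $\ii$ and $k$ then leaves only the $k=r-1$ contribution, proving the formula. The only real content is identifying the correct partner for each face and applying Proposition~\ref{prop:right_signature}; I do not anticipate a genuine obstacle, though bookkeeping at the bottom face (where the partner tuple uses a different underlying index set) is the step most likely to invite a sign error and deserves the most careful verification.
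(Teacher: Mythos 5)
Your proof is correct and follows essentially the same route as the paper: expand $d(\barsub_E)$ face by face, observe that the deletion of the top vertex $E$ produces exactly $(-1)^{r-1}\sum_{\ii}\sgn(\ii)\tau_{\ii}$, and cancel all other faces in pairs via sign-reversing transpositions of tuple entries. The only cosmetic differences are organizational (you run a fixed-point-free involution for each fixed face index, so you never need the paper's remark that coincident faces must come from the same index $k=l$) and that you derive the bottom-face sign flip from Proposition~\ref{prop:right_signature} rather than citing the external lemma the paper invokes.
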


\begin{proof}  
Recall that $d(\barsub_E)=\sum_{j=0}^{r-1}(-1)^jd_j(\barsub_E)$, where $d_j$ removes the $(j+1)$-th leftmost term of a given $(r-1)$-simplex. Suppose that $d_k(\sigma_{\ii})=d_l(\sigma_{\jj})$ for some
$\ii,\jj\in \tuples^r$ and $0\leq k,l\leq r-1$, i.e.,
\begin{multline*}
\big(E_{[i_1,\ldots,i_{r-1}]}<\dots<E_{[i_1,\dots,i_{r-k}]}<
E_{[i_1,\dots,i_{r-k-2}]}<\dots<E_{i_1}<E\big)=\\
=\big(E_{[j_1,\ldots,j_{r-1}]}<\dots<E_{[j_1,\dots,i_{r-l}]}<
E_{[j_1,\dots,j_{r-l-2}]}<\dots<E_{j_1}<E\big).
\end{multline*}
Comparing the sizes of the elementary abelian $p$-subgroups occurring in these two chains, it is clear that we must have $k=l$. If $0<k=l<r-1$, then the tuples $\ii$ and $\jj$ are
identical but for $\{ i_{r-k-1}, i_{r-k} \}=\{ j_{r-k-1}, j_{r-k}\}$. Thus, either $\ii=\jj$, or $\sgn(\ii)=-\sgn(\jj)$ by Proposition \ref{prop:right_signature}. In the latter case, the corresponding summands $\sgn(\ii)d_k(\sigma_\ii)$ and $\sgn(\jj)
d_l(\sigma_\jj)$ add up to zero. Assume now that $k=l=0$. Then $[j_1,\dots,j_{r-2}]=[i_1,\dots,i_{r-2}]$ and either $\ii=\jj$, or $j_{r-1}\neq i_{r-1}$ and $\sgn(\ii)=-\sgn(\jj)$ by  Proposition \ref{prop:right_signature} again. In the latter case, the two terms cancel each other out again. Finally, if $k=l=r-1$, the tuples $\ii$ and $\jj$ are identical and the terms contribute to the sum in the statement of the lemma. 
\end{proof}

\begin{remark}\label{rmk:barycentric_subdivision}
The chain $\barsub_E$ represents the barycentric subdivision of an $(r-1)$-simplex $\sigma$. The $(r-1)$-simplex $\sigma_\ii$ for each tuple $\ii\in \tuples^r$ corresponds to one piece of the subdivision of $\sigma$, and the $(r-2)$-simplex $\tau_\ii$ to one piece in the subdivision of the boundary of $\sigma$. For instance, for $r=3$, we have the subdivision of a triangle shown in Figure \ref{fig:barycentric_subdivision}. For each $\ii\in \tuples^2=\{[1,2],[2,1],[1,3],[3,1],[2,3],[3,2]\}$, $\sigma_\ii$ is one of the $6$ small triangles, and $\tau_\ii$ is one of the $6$ segments in the boundary of the large triangle. The $7$ vertices correspond to the subgroups $E_{[i_1,\ldots,i_l]}$ for $l=0,1,2$ and, for $l=0$, $E$ is located at the barycenter of the triangle.
\begin{figure}[h!]
\centering

    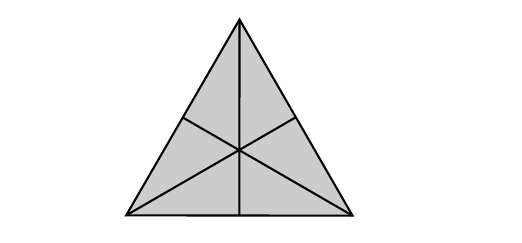

\caption{Barycentric subdivision of a $2$-simplex.}\label{fig:barycentric_subdivision}
\end{figure}

\end{remark}

Recall that $G$ acts by conjugation on $C_{r-1}(|\A_p(G)|;R)$ and that $C_{r-1}(|\A_p(E)|;R)\subseteq C_{r-1}(|\A_p(G)|;R)$. Then, for $x\in G$ and $a\in R$, the element
\[
\ls x(a\barsub_E)=a\sum_{\ii\in \tuples^r}\sgn(\ii)
\ls x{\sigma_{\ii}}=
a\sum_{\ii\in \tuples^r}\sgn(\ii)
\big(\ls xE_{[i_1,\ldots,i_{r-1}]}<\dots<\ls xE_{i_1}<\ls xE\big)
\]
belongs to $C_{r-1}(|\A_p({}^xE)|;R)\subseteq C_{r-1}(|\A_p(G)|;R)$. 
\begin{defn}\label{def:ZGXA}
Let $X\subseteq G$ be a non-empty subset of $G$ and let $h\colon X\to R$ be a function. Define the chain in $C_{r-1}(|\A_p(G)|;R)$,
\[
\barsub_{E,X,h}=\sum_{x\in X}\ls{x}(h(x)\barsub_E)=
\sum_{x\in X}h(x)\sum_{\ii\in \tuples^r}
\sgn(\ii)\ls{x}{\sigma_{\ii}}.
\]
\end{defn}

Now, by Proposition \ref{prop:dz},
\[
d(\barsub_{E,X,h})=(-1)^{r-1}\sum_{x\in X}h(x)\sum_{\ii\in \tuples^r}
\sgn(\ii)\ls{x}{\tau_{\ii}}.
\]
Hence, given $x\in X$ and $\ii\in \tuples^r$, the coefficient of
$\ls{x}{\sigma_{\ii}}$ in $\barsub_{E,X,h}$ and the coefficient of $\ls{x}{\tau_{\ii}}$ in $d(\barsub_{E,X,h})$ are, respectively:
\begin{align*}
C(x,\ii)&=\sum_{(y,\jj)\in\C(x,\ii)}h(y)\sgn(\jj)\text{, }\quad\text{and}
\\ 
D(x,\ii)&=(-1)^{r-1}\sum_{(y,\jj)\in\D(x,\ii)}h(y)\sgn(\jj),
\end{align*}
where
\begin{align*}
\C(x,\ii)&=\{(y,\jj)\in X\times \tuples^r~|~\ls{y}\sigma_\jj=\ls{x}\sigma_\ii\}\qbox{and}
\\ 
\D(x,\ii)&=\{(y,\jj)\in X\times \tuples^r~|~\ls{y}\tau_\jj=\ls{x}\tau_\ii\}.
\end{align*}
If we further assume that $E$ is a \emph{maximal} elementary abelian
$p$-subgroup of $G$, we immediately obtain the following result, see also \cite[Theorem 3.3]{DiazMazza2020}.

\begin{thm}\label{thm:noncontractiblemoregeneral}
Let $E=\langle e_1,\dots,e_r\rangle$ be a \emph{maximal} elementary
abelian $p$-subgroup of rank $r$ of the group $G$. Consider a subset $X\subseteq G$ and $h\colon X\to R$ satisfying that
\begin{enumerate}[\hspace{1cm}(a)]
\item $C(x,\ii)\neq 0$ for some $x\in X$ and some
$\ii\in \tuples^r$,
\item  $D(x,\ii)=0$ for all $x\in X$ and all
$\ii\in \tuples^r$.
\end{enumerate}
Then
\[
0\neq [\barsub_{E,X,h}]\in \widetilde H_{r-1}(|\A_p(G)|;R).
\]
\end{thm}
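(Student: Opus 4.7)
The plan is to verify separately that $\barsub_{E,X,h}$ is a cycle and that it is not a boundary. For the cycle part, I would use the differential formula derived right after Definition \ref{def:ZGXA}, namely $d(\barsub_{E,X,h}) = (-1)^{r-1}\sum_{x\in X}h(x)\sum_{\ii\in \tuples^r}\sgn(\ii)\,\ls{x}\tau_\ii$. After collecting equal simplices (as encoded by the equivalence $\D(x,\ii)$), the coefficient of each $(r-2)$-simplex $\ls{x}\tau_\ii$ appearing in this sum is precisely $D_{x,\ii}$ from \eqref{equ:Dxii}, so condition (b) immediately yields $d(\barsub_{E,X,h})=0$.

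For the non-boundary part, the crucial claim to establish is that, for every $x\in X$ and every $\ii\in \tuples^r$, the $(r-1)$-simplex $\ls{x}\sigma_\ii$ is not a codimension-one face of any $r$-simplex of $|\A_p(G)|$. The chain $\ls{x}\sigma_\ii$ reads $\ls{x}E_{[i_1,\ldots,i_{r-1}]}<\ldots<\ls{x}E_{i_1}<\ls{x}E$, and I would rule out by cases the insertion of an additional elementary abelian $p$-subgroup. Inserting above $\ls{x}E$ is impossible because $\ls{x}E$ remains maximal in $\A_p(G)$, being a conjugate of the maximal $E$. Inserting between consecutive terms $\ls{x}E_{[i_1,\ldots,i_k]}$ and $\ls{x}E_{[i_1,\ldots,i_{k-1}]}$, which have orders $p^{r-k}$ and $p^{r-k+1}$, is impossible because no $p$-group order lies strictly between consecutive powers of $p$. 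Inserting below $\ls{x}E_{[i_1,\ldots,i_{r-1}]}$ is impossible since that bottom subgroup has order $p$ and hence contains no nontrivial proper subgroup.

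With the face claim in hand, for any $\alpha\in C_r(|\A_p(G)|;R)$ the coefficient of $\ls{x}\sigma_\ii$ in $d(\alpha)$ must vanish, since no $r$-simplex contributes to it. By condition (a) there exist $x_0\in X$ and $\ii_0\in \tuples^r$ with $C_{x_0,\ii_0}\neq 0$, so the coefficient of $\ls{x_0}\sigma_{\ii_0}$ in $\barsub_{E,X,h}-d(\alpha)$ equals $C_{x_0,\ii_0}\neq 0$ for every $\alpha$. Hence $\barsub_{E,X,h}$ is a cycle that is not a boundary, and $[\barsub_{E,X,h}]\neq 0$ in $\widetilde H_{r-1}(|\A_p(G)|;R)$.

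There is no serious obstacle. The only point deserving care is the case analysis for the face claim, where one must explicitly invoke both the maximality of $\ls{x}E$ (to block top insertions) and the order-gap between consecutive powers of $p$ (to block middle insertions); once that geometric observation is in place, the rest of the argument is bookkeeping using the formulas for $C_{x,\ii}$ and $D_{x,\ii}$.
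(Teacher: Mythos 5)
Your proof is correct and follows essentially the paper's intended argument (the paper simply cites \cite[Theorem 3.3]{DiazMazza2020} and remarks that maximality of $E$ is what prevents the cycle from being a boundary): condition (b) kills the differential via the coefficient formula $D_{x,\ii}$, and the maximality of ${}^{x}E$ together with the index-$p$ flag structure makes each ${}^{x}\sigma_{\ii}$ a maximal simplex, so condition (a) rules out its being a boundary.
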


The maximality condition on $E$ is needed to ensure that the non-trivial cycle $[\barsub_{E,X,h}]$ is not a boundary. This maximality condition holds, for instance, if $E$ attains the maximum possible rank among the elementary $p$-subgroups of $G$, i.e., if $r=\rk_p(G)$. The conclusion of the theorem implies that  $|\A_p(G)|$ is not contractible and, in particular, that Quillen's conjecture holds for $G$. If $r=\rk_p(G)$, then the conclusion of the theorem and the fact that top dimension integral homology is a free abelian group, show that the $\Q\D_p$ property holds, see \cite[Definition p.474]{AS1993}.
\begin{defn} [$\Q\D_p$] \label{def:QDp}
The finite group $G$ with $r=\rk_p(G)$ has the \emph{Quillen dimension property for $p$}, $\Q\D_p$ for short, if 
\[
\widetilde H_{r-1}(|\A_p(G)|;\QQ)\neq 0.
\]
\end{defn}
Note also that the hypotheses of Theorem \ref{thm:noncontractiblemoregeneral} imply that $O_p(G)=1$ because of Quillen's ``conical contractibility'' \cite[Proposition 2.4]{Quillen1978}. Next, we describe a set of general group theoretical conditions that imply item (a) or (b) of Theorem \ref{thm:noncontractiblemoregeneral}, and some of the upcoming arguments are based on the following elementary fact. 

\begin{lem}\label{lem:sum_set_with_involution_is_0}
Let $\D$ be a finite set, $h\colon \D\to R$ a function, and $\Omega$ a non-trivial involution of $\D$, i.e., a map $\Omega\colon \D\to \D$ such that $\Omega^2(y)=y\neq \Omega(y)$ for all $y\in \D$,  and assume that $h(y)+h(\Omega(y))=0$ for all $y\in \D$. Then $\sum_{y\in \D} h(y)=0$.
\end{lem}
\begin{proof}
Partition the set $\D$ into the subsets $\{y,\Omega(y)\}$.
\end{proof}

In the statement below, for a subset $X$ and an element $y$ of a group $G$, we write $Xy$ for the subset $\{xy\mid x\in X\}$, and we set, for $\ii,\jj\in \tuples^r$ with $\jj=[j_1,\ldots,j_{r-1}]$ and for elements $\{y_1,\ldots,y_r\}\subseteq G$,
\begin{align}\label{equ:D(x,i,j)}
\D(x,\ii,\jj)&=\{y\in X~|~\ls{y}\tau_\jj=\ls{x}\tau_\ii\}\text{, }\D^*(x,\ii,\jj)=\D(x,\ii,\jj)\cap (Xy_{j_1}\cup Xy_{j_1}^{-1}),\\
D(x,\ii,\jj)&=\sum_{y\in \D(x,\ii,\jj)} h(y)\text{, and }D^*(x,\ii,\jj)=\sum_{y\in \D^*(x,\ii,\jj)} h(y).\nonumber
\end{align}
Lemma \ref{lem:sum_set_with_involution_is_0}, Equation \eqref{equ:D(x,i,j)}, and items (b,b2,c) below will be used in Section \ref{section:PGU}. We stress that, in the statement below, the elements $y_i$ do not have to belong to $X$.

\begin{lem}\label{lem:hypotheses_for_(a)(b)}
Let $E=\langle e_1,\dots,e_r\rangle$ be a \emph{maximal} elementary
abelian $p$-subgroup of rank $r$ of the group $G$, $\{y_1,\ldots,y_r\}\subseteq G$, $X\subseteq G$, and $h\colon X\to R^*$. Then
\begin{enumerate}
\item[(a)] If for all $x',x\in X$ and all $n\in N_G(E)$ with $x'=xn$, we have $x'=x$, then $G$, $E$, $X$, $h$ satisfy hypothesis (a) of Theorem \ref{thm:noncontractiblemoregeneral}.
\end{enumerate}
Moreover, if $\ii,\jj\in \tuples^r$, $\jj=[j_1,\ldots,j_{r-1}]$, and we assume that 
\begin{enumerate}
\item[(b)] $y_{j_1}\in C_G(E_{j_1})$ and, if $y\in X\cap Xy_{j_1}^\epsilon$, $\epsilon\in \{-1,1\}$, then $h(y)+h(yy_{j_1}^{-\epsilon})=0$, 
\end{enumerate}
then the following hold,
\begin{enumerate}
\item[(b1)] If $X\cap Xy_{j_1}\cap Xy_{j_1}^{-1}=\emptyset$, then $D^*(x,\ii,\jj)=0$.
\item[(b2)] If $y_{j_1}$ has order $2$, then $D^*(x,\ii,\jj)=0$.
\end{enumerate}
In addition, the following statements hold,
\begin{enumerate}
\item[(c)] $D(x,\ii)=(-1)^{r-1}\sum_{\jj\in \tuples^r}\sgn(\jj)D(x,\ii,\jj)$.
\item[(c1)] If for all $1\leq j_1\leq r$, $X\subseteq (Xy_{j_1}\cup Xy_{j_1}^{-1})$ and the hypotheses of (b) and (b1) hold, then $G$, $E$, $X$, $h$ satisfy hypothesis (b) of Theorem \ref{thm:noncontractiblemoregeneral}.
\item[(c2)] If for all $1\leq j_1\leq r$, $X\subseteq Xy_{j_1}$ and the hypotheses of (b) and (b2) hold, then  $G$, $E$, $X$, $h$ satisfy hypothesis (b) of Theorem \ref{thm:noncontractiblemoregeneral}.
\end{enumerate}
\end{lem}
\begin{proof}
We consider the chain $\barsub_{E,X,h}$ of Definition \ref{def:ZGXA} and start with item (a): Consider $x\in X$ and $\ii\in \tuples^r$. If $(x',\jj)\in \C(x,\ii)$ then $\ls{x'}\sigma_\jj=\ls{x}\sigma_\ii$ and, in particular, $\ls{x'}E=\ls{x}E$. Hence $x^{-1}x'\in N_G(E)$ and, by the hypothesis in (a), we must have $x'=x$. Thus, we have $\ii=\jj$, $\C(x,\ii)=\{(x,\ii)\}$, and $C(x,\ii)=h(x)\neq 0$.

For item (b1), note that, by definition, $\D^*(x,\ii,\jj)=\D^+(x,\ii,\jj)\cup \D^-(x,\ii,\jj)$, where 
\[
\D^+(x,\ii,\jj)=\D^*(x,\ii,\jj)\cap Xy_{j_1}\text{ and }\D^-(x,\ii,\jj)=\D^*(x,\ii,\jj)\cap Xy_{j_1}^{-1},
\]
and that, by the hypothesis in point (b1),
\[
\D^+(x,\ii,\jj)\cap \D^-(x,\ii,\jj)\subseteq X\cap Xy_{j_1}\cap Xy_{j_1}^{-1}=\emptyset.
\]
Now, by the first hypothesis in point (b),  $y_{j_1}$ belongs to $C_G(E_{j_1})=C_G(\tau_\jj)$. In particular, we have a non-trivial involution $\D^*(x,\ii,\jj)\to \D^*(x,\ii,\jj)$, given by $y\mapsto yy_{j_1}^{-1}$ if $y\in \D^+(x,\ii,\jj)$ and by $y\mapsto yy_{j_1}$ if $y\in \D^-(x,\ii,\jj)$. Then we are done by Lemma \ref{lem:sum_set_with_involution_is_0} and by the second hypothesis in point (b). For item (b2), the arguments are identical but for considering the non-trivial involution $\D^*(x,\ii,\jj)\to \D^*(x,\ii,\jj)$ given by $y\mapsto yy_{j_1}$.

Item (c) follows from the decomposition $\D(x,\ii)=\cup_{\jj\in \tuples^r} \D(x,\ii,\jj)\times \{\jj\}$. Finally, for items (c1) and (c2), note that, in these cases, we have $\D^*(x,\ii,\jj)=\D(x,\ii,\jj)$ and $D^*(x,\ii,\jj)=D(x,\ii,\jj)$,  so that the conclusions follow directly from items (b,b1,c) or (b,b2,c), respectively,
\end{proof}

Note that, if $X$ is a non-trivial subgroup of $G$ with $\{y_1,\ldots,y_r\}\subseteq X$, then the condition of Lemma \ref{lem:hypotheses_for_(a)(b)}(a) takes the following simplified form,
\[
X\cap N_G(E)=\{1\},
\]
the hypothesis in Lemma \ref{lem:hypotheses_for_(a)(b)}(b1) does not hold, and the conditions involving $X$ in Lemma \ref{lem:hypotheses_for_(a)(b)}(c1,c2) are valid. This situation that $X$ is a subgroup of $G$ does not arise in Section \ref{section:PGU}, but it might be useful in future applications. Below we give an example with $X$ a subgroup of $G$.
\begin{exa}
For the symmetric group on $7$ letters, $G=S_7$, $p=3$, we have $O_3(G)=1$, $\rk_3(G)=2$, and we set 
\[
e_1=(1,2,3)\text{, }e_2=(4,5,6)\text{, }y_1=(3,7)\text{, and }y_2=(6,7).
\]
Then $E=\langle e_1,e_2\rangle$ is an elementary abelian $p$-subgroup of $p$-rank $2$, and we choose  $X$ as the following \emph{subgroup} of $G$,
\[
X=\langle\{y_1,y_2\}\rangle\cong S_3.
\]
We define the function $h$ as the sign map $h\colon S_3\to \{-1,+1\}\subseteq \ZZ=R$, i.e., by
\[
h(1)=h(y_1y_2)=h(y_2y_1)=+1\text{ and }h(y_1)=h(y_2)=h(y_1y_2y_1)=-1.
\]
Then it is straightforward that the hypotheses in items (a,b,b2,c2) of Lemma  \ref{lem:hypotheses_for_(a)(b)} hold, so that Theorem \ref{thm:noncontractiblemoregeneral} gives  $\widetilde H_1(|\A_3(G)|;\ZZ)\neq 0$ and that $\Q\D_3$ holds for $G$. In Figure \ref{fig:triangulationS7r=2}, the non-zero homology cycle constructed is depicted, with bullet points and squares corresponding, respectively, to $p$-rank 1 and 2 elementary abelian $p$-subgroups. See also the barycentric subdivisions in Remark \ref{rmk:barycentric_subdivision}.
\begin{figure}[h!]
\centering

    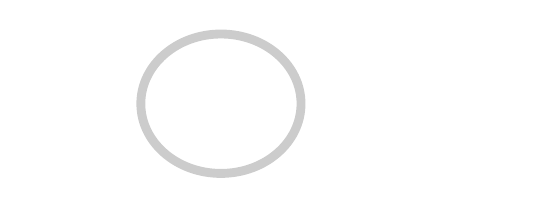

\caption{Triangulation of the sphere $S^1$ arising from $\A_3(S_7)$.}
\label{fig:triangulationS7r=2}
\end{figure}
 As explained in \cite[Remark 4.3]{DiazMazza2020}, in this case we cannot expect to find elements $y_1$ and $y_2$ that commute.
\end{exa}

\section{Proof of Theorem A.}
\label{section:PGU}

In this section, we prove the Quillen dimension property for $\PGU_n(q)$ and its extensions by field automorphisms of order $p$, as in Theorem A of the Introduction. The techniques we employ are better described in terms of the symmetric group embedded into $\GU_n(q)$, see Definition \ref{def:yiY}, and quasi-reflections, see Subsection \ref{subsection:quasi-reflections}. We will apply Theorem \ref{thm:noncontractiblemoregeneral} and Lemmas \ref{lem:sum_set_with_involution_is_0} and \ref{lem:hypotheses_for_(a)(b)} to construct non-trivial homology classes in the appropriate dimensions; see Theorem \ref{thm:QDp_for_PGUn} for the case of $\PGU_n(q)$ and Theorem \ref{thm:QDp_for_PGUnfield} for the case of its extension by field automorphisms of order $p$. In general, there will be non-trivial identifications among simplices. See  Remark \ref{rmk:some_nonidentified_simplices} and Example \ref{example:n2_quasireflections} for more details. Unless stated otherwise, we assume throughout this section that $p$ is an odd prime dividing $q+1$ with $(p,q)\neq (3,2)$, and that $n\geq 2$.

We start with the case of $\PGU_n(q)$ and refer the reader to Subsection \ref{subsection:preliminaries_unitary_groups} for basic properties of this group, in particular, by Proposition \ref{prop:pextensions_PSUnq}, we have 
\[
\rk_p(\PGU_n(q))=n-1.
\]
Throughout this section, we write $Z=Z(\GU_n(q))$  and, for an element $x\in \GU_n(q)$ (respectively a subset $X\subseteq \GU_n(q)$),
we denote by $[x]$ (resp. $[X]$) the image of $x$ (resp. $X$) in the quotient $\PGU_n(q)$. Moreover, for a function $h\colon X\to \ZZ^*$,
we denote by $[h]\colon [X]\to \ZZ^*$ the map $[h]([x])=h(x)$, where we assume that $X\to [X]$ is an injection whenever we consider $[h]$. Finally, for an $r$-simplex of $|\A_p(\GU_n(q))|$,
\[
\sigma=P_0<\cdots<P_r,
\]
we denote by $[\sigma]$ the $r$-simplex of $|\A_p(\PGU_n(q))|$ given as follows, 
\[
[\sigma]=[P_0]<\cdots<[P_r],
\]
where we assume that the strict inequalities are preserved whenever we consider $[\sigma]$.  As the coming proofs are lengthy, we have split them into several intermediate results that incrementally present the different elements and arguments. We start by recalling the usual embedding of the symmetric group $S_n$ into $\GU_n(q)$.

\begin{defn}\label{def:yiY}
For $i=1,\ldots,n-1$, let $y_i$ be the usual permutation matrix of $\GU_n(q)$ that corresponds to the transposition $s_i=(i,i+1)\in S_n$, and denote by $Y=\langle y_1,\ldots,y_{n-1}\rangle$ the group of permutation matrices of $\GU_n(q)$, so that we have an isomorphism  $Y\to S_n$ given by $y_i\mapsto s_i$.
\end{defn}

In the rest of the paper, we will employ the isomorphism $Y\to S_n$ above without further notice. In particular, we define the subsets $Y^+, Y^-,\partial Y^+, \partial Y^-$ as the pre-images of the subsets $S_n^+,S_n^-,\partial S_n^+, \partial S^-_n$ by this isomorphism, respectively, see Definition \ref{def:Sn+-partialSn+-}. Next, we recall that the permutation matrix $y\in Y$ acts on diagonal elements of $T\leq \GU_n(q)$ as follows,
\begin{align}
\label{equ:Yacts_diagonal_elements}
\ls{y}\diag(\alpha_1,\ldots,\alpha_n) & =\diag(\alpha_{y^{-1}(1)},\ldots,\alpha_{y^{-1}(n)}).
\end{align}
In the next result, we present most of the ingredients needed in the proof of Theorem \ref{thm:QDp_for_PGUn}. In point (b) below, we recall that $E_i$ is the subgroup of $E$ spanned by the $e_k$ with $k\neq i$. In addition, item (c) implies that, first, every element of $X$ has a unique expression of the form $x^\delta y$ and, second, that the projection $X\to[X]$ is injective.

\begin{lem}\label{lem:EXforPGU}
If $n\geq 2$, $p$ is any prime, $q\geq 3$, and $\mu\in \FF^*_{q^2}$ has order $p$ and satisfies $\mu\overline{\mu}=1$, then there exist diagonal elements of order $p$, $e_1,e_2,e_3,\ldots,e_{n-1}$ in $T\leq \GU_n(q)$, such that, for $E=\langle e_1,e_2,\ldots,e_{n-1}\rangle$, we have,
\begin{enumerate}
\item[(a)] $[E]$ is an elementary abelian $p$-subgroup of $\PGU_n(q)$ of rank $n-1$, and
\item[(b)] $y_i\in C_{\GU_n(q)}(E_i)$ for $1\leq i\leq n-1$.
\end{enumerate}
Moreover, there exists an element $x\in \GU_n(q)$ such that, if we set $X=\{x^\delta y\mid \delta=0,1, y\in Y^-\}$, we have,
\begin{enumerate}
\item[(c)] The map $ \{0,1\}\times Y\to \PGU_n(q)$, $(\delta,y)\mapsto [x^\delta y]$, is injective, and
\item[(d)] For any diagonal element $t=\diag(\alpha_1,\ldots,\alpha_n)\in T$, we have that $\ls{x}{t}$ is non-diagonal if $\alpha_1\neq \alpha_2$ and that $\ls{x}t=t$ if $\alpha_1=\alpha_2$.
\end{enumerate}
\end{lem}
\begin{proof}
We define the elements $e_i$ of the statement as follows, 
\[
e_i=\diag(\mu,\ldots,\mu,1,\ldots,1),
\]
where $1\leq i\leq n-1$ and $\mu$ appears $i$ times, and we record the following observation,
\begin{equation}\label{equ:two_diagonal_entries_equal}
\text{the $j$-th and $(j+1)$-th diagonal entries of $e_i$ are equal if and only if $j\neq i$.}
\end{equation}

For point (a), assume that 
\[
e_1^{k_1}e_2^{k_2}\cdots e_{n-1}^{k_{n-1}}=z\in Z
\]
for some integers $k_i$. As all the elements $e_i$'s have last diagonal entry equal to $1$, we immediately deduce that $z=1$, and then it easily follows that $k_i=0\mod p$. Point (b) is an easy computation employing Equation \eqref{equ:two_diagonal_entries_equal} and the action of $Y$ on diagonal elements described in Equation \eqref{equ:Yacts_diagonal_elements}.

Next, we define the element $x$ of the statement as the quasi-reflection given by Proposition \ref{prop:a_quasi_reflection}, so that point (d) is immediate from the conclusions there. Regarding point (c), assume that $x^\delta y =zx^{\delta'}y'$ for $\delta,\delta'\in \{0,1\}$, $y,y'\in Y$, and $z\in Z$. Then $x^{\delta-\delta'}=zy'y^{-1}$, and we note that the right-hand side is a monomial matrix. But $x$, and hence also $x^{-1}$, is not a monomial matrix by Proposition \ref{prop:a_quasi_reflection}. Thus, we conclude that $\delta=\delta'$, that $y=zy'$, and that $y'=y$.
\end{proof}

Now we are ready to prove the $\Q\D_p$ property for $\PGU_n(q)$.

\begin{thm}\label{thm:QDp_for_PGUn}
If $n\geq 2$, $p$ is odd, $p$ divides $q+1$, and $(p,q)\neq (3,2)$, then we have $\widetilde{H}_{n-2}(|\A_p(\PGU_n(q))|;\ZZ)\neq 0$ and $\Q\D_p$ holds for $\PGU_n(q)$.
\end{thm}
\begin{proof}
We check the hypotheses of Theorem \ref{thm:noncontractiblemoregeneral} for $G=\PGU_n(q)$. 
Firstly, as $p\mid (q+1)$, there is an element $\mu$ of order $p$ in $\FF^*_{q^2}$ with $\mu\overline{\mu}=1$. In addition, as $p$ is odd, $p$ divides $q+1$, and $(p,q)\neq (3,2)$, we necessarily have $q\geq 4$. Then we may apply Lemma \ref{lem:EXforPGU} with this value of $\mu$ and obtain $e_1,e_2,e_3,\ldots,e_{n-1}$, $E$, $x$, and $X$, satisfying the conclusions there. As required in Theorem \ref{thm:noncontractiblemoregeneral}, $[E]$ is a maximal elementary abelian $p$-subgroup of $G$ by Lemma \ref{lem:EXforPGU}(a) and because $\rk_p(G)=n-1$ by Proposition \ref{prop:pextensions_PSUnq}. Consider now the following map $h\colon X\to \ZZ^*$,
\begin{equation}\label{equ:h_for_PGUn}
h(x^\delta y)=\sgn(y)(-1)^\delta,
\end{equation}
where the sign map was defined in Equation \eqref{equ:sign_map_symmetric_group}. Then we show below that
\[
[E]=\langle [e_1],\ldots,[e_{n-1}]\rangle\text{, }[X]\text{, and }[h]
\]
fulfil items (a) and (b) of Theorem \ref{thm:noncontractiblemoregeneral}. Note that $X\mapsto [X]$ is an injection by Lemma \ref{lem:EXforPGU}(c), so that the map $[h]\colon [X]\to \ZZ^*$ is well defined.

\textbf{Hypothesis $(a)$ of Theorem \ref{thm:noncontractiblemoregeneral} holds.}  We show that, for the tuple $\ii=[1,2,\ldots,n-2]\in \tuples^{n-1}$,  we have 
\[
\C([1],\ii)=\{([1],\ii)\},
\]
and this is enough for Theorem \ref{thm:noncontractiblemoregeneral}(a) to hold as the co-domain of $[h]$ is $\ZZ^*$. In turn, this is equivalent to show that, if $x^\delta y \in X$ and $\jj\in \tuples^{n-1}$ satisfy that
\[
[\ls{x^\delta y}\sigma_\jj]=[\sigma_\ii],
\]
then $\delta=0$, $y=1$, and $\jj=\ii$. If we write 
\[
\jj=[j_1,j_2,\ldots,j_{n-2}]
\]
with $\{1,\ldots,n-1\}=\{j_1,j_2,\ldots,j_{n-1}\}$, we are requiring that $[x^\delta y]$ conjugates the $(n-2)$-simplex $[\sigma_\jj]$, which is equal to
\[
\langle [e_{j_{n-1}}]\rangle<\langle [e_{j_{n-1}}],[e_{j_{n-2}}]\rangle<\cdots<\langle [e_{j_{n-1}}],\ldots,[e_{j_2}]\rangle<[E]=\langle [e_{j_{n-1}}],\ldots,[e_{j_2}],[e_{j_1}]\rangle,
\]
into the $(n-2)$-simplex $[\sigma_\ii]$, which equals
\[
\langle [e_{n-1}]\rangle<\langle [e_{n-1}],[e_{n-2}]\rangle<\cdots<\langle [e_{n-1}],\ldots,[e_2]\rangle<[E]=\langle [e_{n-1}],\ldots,[e_2],[e_1]\rangle.
\] 

Examining the two $(n-2)$-simplices above, we have that, for each $n-1\geq k\geq 2$,
\[
\ls{x^\delta y}e_{j_k}=z_k\cdot e_{n-1}^{m_{k,n-1}}\cdots e_k^{m_{k,k}},
\]
for some integers $m_{k,i}$ and some elements $z_k\in Z$. Conjugating by $x^{-\delta}$, and using, if $\delta=1$, Lemma \ref{lem:EXforPGU}(d) and Equation \eqref{equ:two_diagonal_entries_equal}, we obtain that
\begin{equation}\label{equ:n-2simplices_unfolded}
\ls{y}e_{j_k}=z_k\cdot e_{n-1}^{m_{k,n-1}}\cdots e_k^{m_{k,k}}.
\end{equation}
Based on this equation, we will prove inductively on $k=n,n-1,\ldots,2$, that
\begin{equation}\label{equ:induction_for_hypotehsis_a}
j_m=m\text{ for $m=n-1,n-2,\ldots,k$ and }y(m)=m\text{ for $m=n,\ldots,k+1$.}
\end{equation}

We note that the base case $k=n$ is vacuous and assume that Equation \eqref{equ:induction_for_hypotehsis_a} holds for $k$ with $n\geq k\geq 3$. Then  Equation \eqref{equ:properties_of_T_part_d} of Proposition \ref{prop:properties_of_T}, with the $k$ there equal to the $k$ here, shows that $2\leq y(k)\leq k$. On the other hand, we have, by hypothesis, that $j_{k-1}\leq k-1$. Then, checking against Equation \eqref{equ:n-2simplices_unfolded}, with the $k$ there equal to $k-1$ here, we find that the only solution is $y(k)=k$ and $j_{k-1}={k-1}$. Hence, Equation \eqref{equ:induction_for_hypotehsis_a} holds for $k-1$.

Now, Equation \eqref{equ:induction_for_hypotehsis_a} for $k=2$ gives $y(k)=k$ for $k=n,\ldots,3$ and $j_k=k$ for $k=n-1,\ldots,2$. Thus, $j_1=1$, $\jj=\ii$, and, from Equation \eqref{equ:properties_of_T_part_d} with the $k$ there equal to $2$, we obtain that $y(2)\neq 1$, i.e., that $y(2)=2$. Hence $y=1$ and we are left with proving that $\delta=0$ under the assumption that $j_1=1$. In fact, examining the right-most term of the $(n-2)$-simplices above, we obtain that
\[
\ls{x^\delta}e_1=z_1\cdot e_{n-1}^{m_{1,n-1}}\cdots e_2^{m_{1,2}}e_1^{m_{1,1}}
\]
for some integers $m_{1,i}$ and some $z_1\in Z$. If $\delta=1$, we get a contradiction with Lemma \ref{lem:EXforPGU}(d) and Equation \eqref{equ:two_diagonal_entries_equal}. Thus, $\delta=0$ and we are done.

\textbf{Hypothesis $(b)$ of Theorem \ref{thm:noncontractiblemoregeneral} holds.}  For tuples $\ii,\jj\in \tuples^{n-1}$, where $\jj=[j_1,\ldots,j_{n-2}]\in \tuples^{n-1}$, and $x^\delta y\in X$, define
\begin{align*}
\D^-([x^\delta y],\ii,\jj)&=\{[x^{\delta'}y']\in [X]~|~[\ls{x^{\delta'}y'}\tau_\jj]=[\ls{x^\delta y}\tau_\ii]\text{ and }y'y_{j_1}\in Y^-\},\\
\D^+([x^\delta y],\ii,\jj)&=\{[x^{\delta'}y']\in [X]~|~[\ls{x^{\delta'}y'}\tau_\jj]=[\ls{x^\delta y}\tau_\ii]\text{ and }y'y_{j_1}\in Y^+\},
\end{align*}
and denote by $D^-([x^\delta y],\ii,\jj)$ and $D^+([x^\delta y],\ii,\jj)$ the corresponding sums. Then, by the first and second parts of Proposition \ref{prop:properties_of_T}(a), we obtain that, see Equation \eqref{equ:D(x,i,j)},
\[
D([x^\delta y],\ii,\jj)=D^-([x^\delta y],\ii,\jj)+D^+([x^\delta y],\ii,\jj),
\]
and, by Lemma \ref{lem:hypotheses_for_(a)(b)}(c), it is enough to show that both summands above are zero.

Now, we obtain that $D^-([x^\delta y],\ii,\jj)=D^*([x^\delta y],\ii,\jj)$ from Lemma \ref{lem:EXforPGU}(c), and that $D^*([x^\delta y],\ii,\jj)=0$ from Lemma \ref{lem:EXforPGU}(b), Equation \eqref{equ:h_for_PGUn}, and Lemma \ref{lem:hypotheses_for_(a)(b)}(b,b2). Note that the hypotheses of Lemma \ref{lem:hypotheses_for_(a)(b)} do not require that $[y_{j_1}]\in [X]$, and here we have that $[y_{j_1}]\in [X]$ if and only $2\leq j_1\leq n-1$ by Definition \ref{def:Sn+-partialSn+-}. To show that $D^+([x^\delta y],\ii,\jj)=0$, we consider the  map
\[
\Omega\colon \D^+([x^\delta y],\ii,\jj)\to \D^+([x^\delta y],\ii,\jj)\text{ with }\Omega([x^{\delta'}y'])=[x^{1-\delta'}y'].
\]
To see that $\Omega$ is well defined, note that, if $[x^{\delta'}y']\in \D^+([x^\delta y],\ii,\jj)$, then $y'y_{j_1}\in Y^+$, $y'\in \partial Y^-$ by Definition \ref{def:Sn+-partialSn+-}, and $y'(\{j_1,j_1+1\})=\{1,2\}$ by Proposition \ref{prop:properties_of_T}(b). Thus, by Equations \eqref{equ:two_diagonal_entries_equal} and \eqref{equ:Yacts_diagonal_elements}, we obtain that the first two diagonal entries of $\ls{y'}e_i$ are equal for $i\neq j_1$. Thus, by Lemma \ref{lem:EXforPGU}(d), $x\in C_{\GU_n(q)}(\ls{y'}{E_{j_1}})=C_{\GU_n(q)}(\ls{y'}{\tau_\jj})$ and 
\[
[\ls{x^{1-\delta'}y'}\tau_\jj]=[\ls{x^{1-\delta'}}(\ls{x^{2\delta'-1}}(\ls{y'}\tau_\jj))]=[\ls{x^{\delta'}y'}\tau_\jj],
\]
so that $\Omega$ is well defined. Moreover, the map $\Omega$ is a non-trivial involution by Lemma \ref{lem:EXforPGU}(c), and thus we are done by Lemma \ref{lem:sum_set_with_involution_is_0} and Equation \eqref{equ:h_for_PGUn}. 
\end{proof}

\begin{remark}\label{rmk:why_x_and_Y-+}
The deduction that $y=1$ in the previous proof does not work if we allow $y$ to be any element of $Y$. For instance, according to the proof of Theorem \ref{thm:QDp_for_PGUn}, the following $(n-2)$-simplex is not stabilized by any non-trivial element of $Y^-$,
\[
\langle [e_{n-1}]\rangle<\langle [e_{n-1}],[e_{n-2}]\rangle<\cdots<\langle [e_{n-1}],\ldots,[e_2]\rangle<\langle [e_{n-1}],\ldots,[e_2],[e_1]\rangle,
\]
but, however, it is stabilized by $1\neq y_1\in Y\setminus Y^-$. The obstruction to consider all of $Y$ is that $Y$ normalizes $T$, so that $\Q\D_p$ cannot hold for $\langle T,Y\rangle$ by ``conical contractibility'' \cite[Proposition 2.4]{Quillen1978}. This explains why we keep just ``half'' of $Y$, $Y^-$, and add the quasi-reflection $x$ that does not normalize $T$; note that $\ls{x}e_1\notin T$ by Lemma \ref{lem:EXforPGU}(d) and Equation \eqref{equ:two_diagonal_entries_equal}. The proof of Theorem \ref{thm:QDp_for_PGUn} is based on that,  by the combinatorial properties in Proposition \ref{prop:properties_of_T}, $x$ fixes the ``boundary'' $\partial Y^-$ of $Y^-$, so that $Y^-$ and its ``reflection'' by $x$, $xY^-$, are correctly ``glued'' along their common boundary. We can think of $X=Y^-\cup xY^-$ as a version of $Y=Y^-\cup y_1Y^-$  ``twisted'' by $x$ instead of $y_1$.
\end{remark}

The rest of this section is devoted to the case of $\PGU_n(q)$ extended by field automorphisms of order $p$. If $q=s^{pl}$ for $p$ an odd prime, recall, from Definition \ref{def:Phi}, the field automorphism $\Phi$ of order $p$ of $\PGU_n(q)$ and that, by Proposition \ref{prop:pextensions_PSUnq}, we have $\rk_p(\PGU_n(q)\langle \Phi\rangle)=n$. We extend the bracket notation $[\cdot]$ for elements, subsets, functions, and simplices, introduced at the beginning of this section, to the quotient group $\PGU_n(q)\langle \Phi\rangle$ of $\GU_n(q)\langle \Phi\rangle$ by $Z$. We will need the following arithmetic digression.

\begin{lem}\label{lem:arithmethic_digression_for_PGUPhi}
Let $q=s^{pl}$ with $p$ odd, $p\mid q+1$, and $(p,q)\neq (3,8)$. Then there exists $\lambda\in \FF_{q^2}$ such that, for $\Lambda=\lambda^{1-s^{2l}}$, we have
\[
\begin{cases}
\lambda^{q+1}=1\text{, and}\\
\ord(\lambda)=\ord(\Lambda)=r\text{ for a prime $r\neq 2,3,5,p$.}
\end{cases}
\]
\end{lem}
\begin{proof}
By Fermat's little theorem, and because $p|(q+1)=(s^{pl}+1)$, we have that $p|(s^l+1)$. Now, by Zsigmondy's Theorem, there exists a prime $r$ that divides $s^{2pl}-1$ and does not divide $s^{kl}-1$ for $1\leq k< 2p$. This works but for $2p=1,2$ or $(p,q)=(3,8)$, and these cases are excluded by assumption. As $1\leq p<2p$, we get that $r$ does not divide $s^{pl}-1$ and, as $s^{2pl}-1=(s^{pl}-1)(s^{pl}+1)$, we deduce that $r$ divides $s^{pl}+1$. As $1\leq 2<2p$, $r$ does not divide $s^{2l}-1$ and, as $p\mid(s^l+1)$ and $(s^l+1)\mid (s^{2l}-1)$, we deduce that $r\neq p$. In addition, we have, by construction, that the order of $s^l$ modulo $r$ is $2p$. Then, by Fermat's little theorem again, we deduce that $2p\mid (r-1)$, so that $r\geq 1+2p\geq 7$ and $r\neq 2,3,5$. To finish, define $\lambda$ as any element of order $r$ of $\FF_{q^2}$.
\end{proof}

The next result serves as preparation for Theorem \ref{thm:QDp_for_PGUnfield}, and the only conclusion below that fails for $n=2$ is the third part of point (e). In fact, the case of $\PGU_2(q)$ extended by field automorphisms of order $p$ requires slightly different techniques, and we have omitted this case from the present work. In point (b) below, we recall that $(E_\Phi)_i$ is the subgroup of $E_\Phi$ spanned by the $e_k$ with $k\neq i$. 

\begin{lem}\label{lem:EXforPGUfield}
If $n\geq 3$, $q=s^{pl}$, $p$ is odd, $p\mid (q+1)$, and $(p,q)\neq (3,8)$, then there exist elements of order $p$, $e_1,e_2,e_3,\ldots,e_{n-1},e_n=\Phi$, in $\GU_n(q)\langle \Phi\rangle$, such that $e_i\in T$ for $1\leq i\leq n-1$ and that, for $E_\Phi=\langle e_1,e_2,\ldots,e_{n-1},e_n\rangle$, we have,
\begin{enumerate}
\item[(a)] $[E_\Phi]$ is an elementary abelian $p$-subgroup of $\PGU_n(q)\langle \Phi\rangle$ of rank $n$, and
\item[(b)] $y_i\in C_{\GU_n(q)}((E_\Phi)_i)$ for $1\leq i\leq n-1$.
\end{enumerate}
Moreover, there exists an element $x\in \GU_n(q)$ and a diagonal element $d\in \SU_n(q)$ such that, if we set $X=\{x^\delta y\mid \delta=0,1, y\in Y^-\}$, we have
\begin{enumerate}
\item[(c)] The map $\{0,1\}\times Y\to \PGU_n(q)$, $(\delta,y)\mapsto [x^\delta y]$, is injective,
\item[(d)] For any diagonal element $t=\diag(\alpha_1,\ldots,\alpha_n)\in T$, we have that $\ls{x}{t}$ is non-diagonal if $\alpha_1\neq \alpha_2$ and that $\ls{x}t=t$ if $\alpha_1=\alpha_2$.
\item[(e)] $[e_i,\Phi]=1$ for $i=1,\ldots,n-1$, $[x,\Phi]=1$, $[x,d]=1$, $[Y,\Phi]=1$, $\ls{(\ls{d}y)}t=\ls{y}t$ and $\ls{d}t=t$ for all $y\in Y$ and all $t\in T$, and $[\ls{d}\Phi]\notin [E_\Phi]$.
\end{enumerate}
\end{lem}
\begin{proof}
We obtain $e_1,\ldots,e_{n-1}$ and $x$ as follows: By Fermat's little theorem, and because $p|(q+1)=(s^{pl}+1)$, we get that $p|(s^l+1)$, and hence an element $\mu$ of order $p$ in $\FF_{s^{2l}}\leq \FF_{q^2}$ with $\mu\overline{\mu}=1$. Then we apply Lemma \ref{lem:EXforPGU} with this value of $\mu$, and with $q$ there equal to $s^l$ here, noting that $(p,q)\neq (3,8)$ implies that $s^l\geq 3$. Thus, we obtain $e_1,e_2,e_3,\ldots,e_{n-1}$, $E$, and $x$ satisfying the conclusions of Lemma \ref{lem:EXforPGU}, and, in addition, because $\FF_{s^{2l}}$ is the fixed field of $\lambda\mapsto\lambda^{s^{2l}}$, 
\begin{equation}\label{equ:e_ixPhi=1}
[e_i,\Phi]=1\text{ for $i=1,\ldots,n-1$ and }[x,\Phi]=1.
\end{equation}
If we set $E_\Phi=\langle e_1,\ldots,e_{n-1},e_n\rangle$, where $e_n=\Phi$, then it is clear that $[E_\Phi]$ is an elementary abelian $p$-subgroup of $\PGU_n(q)\langle \Phi\rangle$ of rank $n$, as $E_\Phi\cap Z=E\cap Z$ and $E\cap Z=1$ by Lemma \ref{lem:EXforPGU}(a), where we recall that $Z=Z(\GU_n(q))$. Thus, point (a) holds. Points (c) and (d) are consequence of points (c) and (d) in Lemma \ref{lem:EXforPGU}, respectively. The first and second parts of point (e) are exactly Equation \eqref{equ:e_ixPhi=1}, and the fourth part of point (e) follows from that $[y_i,\Phi]=1$ by Definition \ref{def:yiY}. Then the fourth part of point of (e) and Lemma \ref{lem:EXforPGU}(b) show that point (b) holds.

Now, from Lemma \ref{lem:arithmethic_digression_for_PGUPhi}, we obtain $\lambda$ and $\Lambda$ satisfying the properties there, and we define 
\begin{equation}\label{equ:element_diagonal_d}
d=\begin{cases}
\diag(\lambda,\lambda,\lambda^{-2})&n=3,\\
\diag(1,\ldots,1,\lambda,\lambda^{-1})&n\geq 4,
\end{cases}
\end{equation}
so that $d\in \SU_n(q)$. Then the third part of point (e) is consequence of Equation \eqref{equ:x_in_2x2_square}, and the fifth part of point (e) is a consequence of that $d$ is diagonal and of Equation \eqref{equ:Yacts_diagonal_elements}. Thus, we are left with proving the last part of point (e) and, to that aim, we perform the following computation,
\[
\ls{d}\Phi=d\Phi(d^{-1})\Phi=\begin{cases}
\diag(\Lambda,\Lambda,\Lambda^{-2})\Phi&n=3,\\
\diag(1,\ldots,1,\Lambda,\Lambda^{-1})\Phi&n\geq 4.
\end{cases}
\]
Assume now that $\ls{d}\Phi=ze\Phi^m$, where $0\leq m\leq p-1$, $e\in E$, and $z\in Z$. Then we immediately deduce that $m=1$ and, after raising to the $p$-th power, that $\Lambda^p=1$ if $n\geq 4$ or that $\Lambda^{3p}=1$ if $n=3$. But the order of $\Lambda$ is a prime not equal to $p$ or $3$ and we are done.
\end{proof}

Now we are ready to prove $\Q\D_p$ for $\PGU_n(q)$ extended by field automorphisms of order $p$.

\begin{thm}\label{thm:QDp_for_PGUnfield}
If $n\geq 3$, $p$ is odd, $p\mid (q+1)$, $(p,q)\neq (3,8)$, and $G$ is a $p$-extension of $\PGU_n(q)$ by field automorphisms of order $p$, then $\widetilde{H}_{n-1}(|\A_p(G)|;\ZZ)\neq 0$ and $\Q\D_p$ holds for $G$.
\end{thm}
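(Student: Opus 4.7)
The strategy is to construct a \emph{degenerate} Q-sphere of dimension $n-1$ for $G$, satisfying condition (b) of Definition \ref{def:Xhsphere} together with the involutory variant (c2) of Remark \ref{rmk:sphere_even_characteristic}, and then conclude via the version of Theorem \ref{thm:sphere_implies_QDp} explained there, applying Theorem \ref{thm:noncontractiblemoregeneral} directly. Since $\rk_p(\PGU_n(q)) = n-1$ and the field automorphism $\phi$ of order $p$ (whose existence forces $q = q_0^p$) contributes an extra independent $p$-element, $\rk_p(G) = n$, matching the homological degree in the statement.

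First I would choose $\mu \in \FF_{q^2}^*$ of order $p$ with $\mu\overline{\mu} = 1$ (available since $p\mid q+1$) and an orthonormal basis $v_1,\ldots,v_n$ of $\FF_{q^2}^n$ with entries in $\FF_p$, so that both $v_i$ and $\mu$ are fixed by $\phi$ (note $p\mid q+1$ forces $q_0\equiv -1\pmod p$, hence $\mu\in \FF_{q_0^2}$). The quasi-reflections $y_i = Y_{v_i,\mu}$ pairwise commute by \eqref{equ:quasi-reflectionsc}, their product $y_1\cdots y_n = \mu\cdot I_n$ is central in $\GU_n(q)$, and each commutes with $\phi$, so $E = \langle [y_1],\ldots,[y_{n-1}],\phi\rangle\leq G$ is elementary abelian of rank $n$ with canonical generators $e_i = [y_i]$ for $1\leq i\leq n-1$ and $e_n = \phi$. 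Next I would define the involutions: for $1\leq i\leq n-1$ take $x_i$ to be (the class of) the permutation matrix in $\GU_n(q)$ exchanging $v_i$ and $v_{i+1}$, which by \eqref{equ:quasi-reflectionsd} transposes $y_i\leftrightarrow y_{i+1}$, centralises the remaining $y_j$, and commutes with $\phi$; and take $x_n$ to be an involution centralising $\langle [y_1],\ldots,[y_{n-1}]\rangle$ but not $\phi$, constructed from a $\phi$-antifixed diagonal element composed with $\phi$ itself. These elements satisfy the braid relations and therefore present a Coxeter group (a quotient of $S_{n+1}$ for $n\geq 3$ and of $S_2$ for $n=2$). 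With $X$ defined as the analogue of $S^U_n$ via reduced words in the $x_i$'s and $h(x) = (-1)^{l(x)}$, condition (b) is immediate from the centraliser computations above, and condition (c2) follows from a length-function argument modelled on Theorem \ref{thm:X_n_no_squares}.

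The main obstacle, and the genuinely new phenomenon relative to Theorems \ref{thm:QDp_for_SUn}, \ref{thm:QDp_for_PSUn}, and \ref{thm:QDp_for_PGUn}, is that condition (a) of Definition \ref{def:Xhsphere} \textbf{fails}: distinct $x,x'\in X$ can satisfy ${}^xE = {}^{x'}E$, producing simplex identifications that collapse the Coxeter complex of $S_{n+1}$ into the ``jester hat'' with base $S^{n-2}$ and $2n$ apexes described in the introduction (only $2$ apexes when $n=2$, where the complex is literally a suspension of $S^0$). To finish the proof I would return to Theorem \ref{thm:noncontractiblemoregeneral} and analyse the sets $\C(x,\ii)$ and $\D(x,\ii)$ of \eqref{equ:CCxii}--\eqref{equ:DDXii} by hand, carefully tallying the identifications and showing that the resulting $C_{x,\ii}$ are nonzero while the $D_{x,\ii}$ still cancel in pairs via the involutions $x_i$. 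The exclusion $(p,q) = (3,8)$ enters precisely at this verification: for that exceptional pair the field $\FF_{q^2} = \FF_{64}$ fails nondegeneracies analogous to Remarks \ref{rmk:zetanotone} and \ref{rmk:etasqaurenotone}, and the identifications conspire to annihilate the cycle rather than preserve it.
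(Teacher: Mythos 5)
Your overall shape is right (a degenerate construction, condition (c2), a direct appeal to Theorem \ref{thm:noncontractiblemoregeneral}, permutation matrices for $x_1,\ldots,x_{n-1}$, quasi-reflections to build $E$), but the mechanism you propose for the extra, $\Phi$-direction does not work, and that is where all the difficulty lies. Your $x_n$ cannot exist: any element of $G=\PGU_n(q)\langle\Phi\rangle$ whose $\Phi$-component is nontrivial has order divisible by the odd prime $p$, so it is not an involution; and an involution of $\PGU_n(q)$ centralizing $\langle [y_1],\ldots,[y_{n-1}]\rangle$ is the class of a scalar multiple of a $\pm 1$-diagonal matrix in the basis $v_1,\ldots,v_n$, for which $d\,\Phi(d)^{-1}$ is scalar, so $[d]$ automatically commutes with $\Phi$ in $G$. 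Hence there is no involution in $C_G(E_n)$ that moves the $\Phi$-direction, and with it the claimed braid relations, the ``quotient of $S_{n+1}$'', the set $X$ of reduced words and its length function all collapse. The paper's solution is different in kind: it takes $X=\{xY^{\pm1}_{v_j,\lambda}\mid x\in\langle x_1,\ldots,x_{n-1}\rangle,\ 1\leq j\leq n\}$ (only $\delta=+1$ for $n=2$), where $\lambda$ is chosen so that $\Lambda=\lambda^{1-s^{2l}}$ has prime order $r\neq 2,p$; the cancellation of $D_{x,\ii}$ when $j_1=n$ is produced not by a group element but by the sign-reversing involution on the index set $X$, $\delta\mapsto-\delta$ (resp. $v_j\mapsto v_{3-j}$ for $n=2$), which fixes the conjugated face $\tau_\jj$ because $Y_{v_j,\lambda}$ centralizes $\langle e_1,\ldots,e_{n-1}\rangle$; for $j_1\leq n-1$ one uses right multiplication by $x_{j_1}$, exactly as you intend.

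Two further points. First, the verification that some $C_{x,\ii}\neq 0$ is the technical heart and your ``careful tally'' has no mechanism behind it: since the product of all $n$ quasi-reflections $Y_{v_j,u}$ is scalar, every permutation matrix normalizes $[E]$, so the identifications are massive; the paper handles this by an induction on the normal form of $x\in S_n$ (Proposition \ref{prop:normal_form_Sn}) applied to a specific nested flag, using the uniqueness statement \eqref{equ:quasi_reflections_uniqueness_of_product}, the special choice of $\alpha$ in \eqref{equ:alpha}, and the fact that $\Lambda$ has order prime to both $2$ and $p$. Second, your explanation of the exclusion $(p,q)=(3,8)$ is misplaced: transvections and the scalars $\zeta,\eta$ of Remarks \ref{rmk:zetanotone} and \ref{rmk:etasqaurenotone} play no role in this theorem. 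The exclusion enters in the arithmetic step guaranteeing a prime $r\neq 2,p$ with $\ord_r(s^{pl}+1)>\ord_r(s^{2l}-1)$ (needed to define $\Lambda$), which fails only when $\tfrac{s^{pl}+1}{s^{2l}-1}\leq p$, i.e. only for $(p,q)=(3,8)$; your construction offers no substitute for this ingredient.
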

\begin{proof}
We are assuming that $\PGU_n(q)$ admits a field automorphisms of odd order $p$, so that we may write $q=s^{pl}$. As two order-$p$ subgroups of field automorphisms of $\Aut(\PSU_n(q))$ are $\PGU_n(q)$-conjugate by Subsection \ref{subsection:preliminaries_unitary_groups}, we may assume without loss of generality that $G=\PGU_n(q)\langle \Phi\rangle$. 

Now, we apply Lemma \ref{lem:EXforPGUfield} and obtain $e_1,e_2,e_3,\ldots,e_{n-1},e_n$, $E_\Phi$, $x$, $d$, and $X$ satisfying the conclusions there. Next, consider the map $h\colon X\to \ZZ^*$ given by Equation \eqref{equ:h_for_PGUn} and the $(n-1)$-chain provided by Definition \ref{def:ZGXA} for the group $G$,
\[
\barsub=\barsub_{[E_\Phi],[X],[h]},
\]
as well as the chain obtained by conjugating $\barsub$ by $[d]$,
\[
\barsub'=\ls{[d]}\barsub=\barsub_{[\ls{d}E_\Phi],[\ls{d}X],[\ls{d}h]},
\]
where 
\[
\ls{d}E_\Phi=\ls{d}\langle e_1,e_2,e_3,\ldots,e_{n-1},e_n\rangle=\langle e_1,e_2,e_3,\ldots,e_{n-1},\ls{d}\Phi\rangle
\]
by the fifth part of Lemma \ref{lem:EXforPGUfield}(e) and because $e_i\in T$ for $1\leq i\leq n-1$,
\[
\ls{d}X=\ls{d}{\{x^\delta y\mid \delta=0,1, y\in Y^-\}}=\{x^\delta\ \ls{d}y\mid \delta=0,1, y\in Y^-\}
\]
by the third part of Lemma \ref{lem:EXforPGUfield}(e), and $\ls{d}h\colon \ls{d}X\to \ZZ^*$ is given by 
\begin{equation}\label{equ:h'_definition}
\ls{d}h(x^\delta\ \ls{d}y)=h(x^\delta y).
\end{equation}

We prove below that $\barsub-\barsub'$ is is a non-trivial cycle, showing that the conclusions of the statement are valid. For $\ii\in \tuples^n$, we use the notations $\sigma_\ii$, $\tau_\ii$, $\C(\cdot,\ii)$, $\D(\cdot,\ii,\jj)$, etc, of Section \ref{section:topological_preliminaries} and Equation \eqref{equ:D(x,i,j)} for $\barsub$, and their primed versions $\sigma'_\ii$, $\tau'_\ii$, $\D'(\cdot,\ii,\jj)$, etc, for $\barsub'$. Note that $[E_\Phi]$ is a maximal elementary abelian $p$-subgroup of $G$ by Lemma \ref{lem:EXforPGUfield}(a) and because $\rk_p(G)=n$ by Proposition \ref{prop:pextensions_PSUnq}. In particular, $\barsub-\barsub'$ cannot be a boundary.

\textbf{The $(n-1)$-chain $\barsub-\barsub'$ is non-trivial.} We show that, for the tuple $\ii=[n,1,2,\ldots,n-2]\in \tuples^{n}$, we have that
\[
\C([1],\ii)=\{([1],\ii)\},
\]
and that $[\sigma_\ii]$ is different from $[\ls{x^\delta \ (\ls{d}y)}\sigma'_\jj]$ for any $[x^\delta \ \ls{d}y] \in [\ls{d}X]$ and any $\jj\in \tuples^n$. In fact, write $\jj=[j_n,j_1,\ldots,j_{n-2}]$ with $\{j_1,\ldots,j_n\}=\{1,\ldots,n\}$, and assume that $[x^\delta \ \ls{d^\epsilon}y]$ with $\delta,\epsilon=0,1$, $y\in Y^-$, conjugates the $(n-1)$-simplex 
\[
\langle [\ls{d^\epsilon}e_{j_{n-1}}]\rangle<\langle [\ls{d^\epsilon}e_{j_{n-1}}],[\ls{d^\epsilon}e_{j_{n-2}}]\rangle<\cdots<\langle [\ls{d^\epsilon}e_{j_{n-1}}],[\ls{d^\epsilon}e_{j_{n-2}}],\ldots,[\ls{d^\epsilon}e_{j_1}]\rangle <[\ls{d^\epsilon}E_\Phi],
\]
into the $(n-1)$-simplex 
\[
[\sigma_\ii]=\langle [e_{n-1}]\rangle<\langle [e_{n-1}],[e_{n-2}]\rangle<\cdots<\langle [e_{n-1}],[e_{n-2}],\ldots,[e_1]\rangle<[E_\Phi].
\] 
Then we want to show that $\delta=\epsilon=0$, $y=1$, and $\jj=\ii$.

Note that, in the simplex $[\sigma_\ii]$, $\Phi$ only belongs to the right-most subgroup, so that we must have $\{j_{n-1},\ldots,j_1\}=\{1,\ldots,n-1\}$. In addition, we have that
\[
\ls{x^\delta \ (\ls{d^\epsilon}y)} \ {\ls{d^\epsilon}{e_{j_k}}}=\ls{x^\delta \ y}e_{j_k}
\]
for $k=1,\ldots,n-1$, and this follows immediately if $\epsilon=0$, or by the fifth part of Lemma \ref{lem:EXforPGUfield}(e) if $\epsilon=1$. Then the arguments in the proof of Theorem \ref{thm:QDp_for_PGUn} show that $\delta=0$, $y=1$, and $\jj=\ii$. To finish, examination of the right-most subgroups of the $(n-1)$-simplices above shows that 
\[
[\ls{d^\epsilon}\Phi]\in [E_\Phi],
\]
and, by the last part of Lemma \ref{lem:EXforPGUfield}(e), we must have $\epsilon=0$.

\textbf{The $(n-1)$-chain $\barsub-\barsub'$ is a cycle.} Consider tuples $\ii,\jj\in \tuples^n$, where $\jj=[j_1,\ldots,j_{n-1}]\in \tuples^n$, and an element $x^\delta y\in X$. 

If $j_1=1,\ldots,n-1$, then the arguments in the proof of Theorem \ref{thm:QDp_for_PGUn} show that $D([x^\delta y],\ii,\jj)=0$, where we use Lemma \ref{lem:EXforPGUfield}(b) instead of Lemma \ref{lem:EXforPGU}(b) and that, by the second part of Lemma \ref{lem:EXforPGUfield}(e), $[x,\Phi]=1$. Moreover, as $\D'([x^\delta \ \ls{d}y],\ii,\jj)$ is the $[d]$-conjugate of $\D([x^\delta y],\ii,\jj)$, we deduce that $D'([x^\delta \ \ls{d}y],\ii,\jj)=0$ too.

If $j_1=n$, we show that $D([x^\delta y],\ii,\jj)+D'([x^\delta \ \ls{d} y],\ii,\jj)=0$ by means of the map
\[
\Omega\colon \D([x^\delta y],\ii,\jj)\to \D'([x^\delta \ \ls{d}y],\ii,\jj)\text{ with }\Omega([x^{\delta'}\ y'])=[x^{\delta'} \ \ls{d}{y'}].
\]
To check that $\Omega$ is well defined, note first that, as $j_1=n$, the subgroups appearing in the chain $\tau_\jj$ are spanned by diagonal elements, i.e., they do not contain $e_n=\Phi$. As $ [\ls{x^{\delta'}y'}\tau_\jj]=[\ls{x^\delta y}\tau_\ii]$, then the subgroups in $\tau_\ii$ are also spanned by diagonal elements. Thus, by the fifth part of Lemma \ref{lem:EXforPGUfield}(e), we have $ \tau'_\jj=\tau_\jj$,  $\tau'_\ii=\tau_\ii$,
\[
\ls{(\ls{d}y')}\tau'_\jj=\ls{(\ls{d}y')}\tau_\jj=\ls{y'}\tau_\jj\text{, }\ls{(\ls{d}y)}\tau'_\ii=\ls{(\ls{d}y)}\tau_\ii=\ls{y}\tau_\ii,
\]
$\ls{x^{\delta'} \ (\ls{d}y')}{\tau'_\jj}=\ls{x^{\delta'} y'}{\tau_\jj}$, and $\ls{x^{\delta} \ (\ls{d}y)}{\tau'_\ii}=\ls{x^{\delta} y}{\tau_\ii}$. Thus, $\Omega$ is well defined. That $\Omega$ is a bijection follows easily from Lemma \ref{lem:EXforPGUfield}(c) and the third part of Lemma \ref{lem:EXforPGUfield}(e). To finish, note that Equation \eqref{equ:h'_definition} gives that
\[
[h]([x^{\delta'}y'])-[\ls{d}h]([x^{\delta'} \ \ls{d}{y'}]) = h(x^{\delta'}y')-\ls{d}h(x^{\delta'} \ \ls{d}{y'})=h(x^{\delta'}y')-h(x^{\delta'}{y'})=0.
\]
\end{proof}

\begin{remark}\label{rmk:some_nonidentified_simplices}
In the proofs of Theorem \ref{thm:QDp_for_PGUn} and \ref{thm:QDp_for_PGUnfield}, we have shown that the simplex 
\[
\langle [e_{n-1}]\rangle<\langle [e_{n-1}],[e_{n-2}]\rangle<\cdots<\langle [e_{n-1}],\ldots,[e_2]\rangle<\langle [e_{n-1}],\ldots,[e_2],[e_1]\rangle=[E],
\] 
or the simplex
\[
\langle [e_{n-1}]\rangle<\langle [e_{n-1}],[e_{n-2}]\rangle<\cdots<\langle [e_{n-1}],\ldots,[e_2]\rangle<\langle [e_{n-1}],\ldots,[e_2],[e_1]\rangle< [E_\Phi],
\]
respectively, is not identified with any other simplex of the chains constructed there. Nevertheless, some identifications take place as, for instance, for $n\geq 3$, $y_{n-1}\in Y^-$ stabilizes the $(n-2)$-simplex
\[
\langle [e_1]\rangle<\langle [e_1],[e_2]\rangle<\cdots<\langle [e_1],\ldots,[e_{n-2}]\rangle<\langle [e_1],\ldots,[e_{n-2}],[e_{n-1}]\rangle.
\]
\end{remark}

\begin{exa}\label{example:n2_quasireflections}
The chain constructed for $\PGU_3(q)$ is depicted in Figure \ref{fig:triangulationPGU3PGU3Phi} (left),  with bullet points and squares corresponding, respectively, to $p$-rank 1 and 2 elementary abelian $p$-subgroups. The $1$-simplex $\langle [e_2]\rangle<\langle [e_2],[e_1]\rangle$, which is not identified with any other $1$-simplex, is marked in white, and the following two identified $1$-simplices, see Remark \ref{rmk:some_nonidentified_simplices}, are marked in black,
\[
\langle [e_1]\rangle<\langle [e_1],[e_2]\rangle=\ls{y_2}(\langle [e_1]\rangle<\langle [e_1],[e_2]\rangle).
\]
Note that the left-lower half is similar to that in Figure \ref{fig:triangulationS7r=2}, see also Example \ref{exa:Sn+-partialSn+-}, and that the right-upper half is obtained from the left-lower half by conjugation by the quasi-reflection $x$. The elements
\[
e_2=\diag(\mu,\mu,1)\text{ and }\ls{y_2y_1}e_1=\diag(1,1,\mu)
\]
are fixed by conjugation by $x$, see Lemma \ref{lem:EXforPGU}(d). In addition, the elements $e_1$ and $\ls{x}e_1$ are fixed by conjugation by $y_2\in X$ ($e_1\mapsto \ls{y_2}e_1=e_1$ and $\ls{x}e_1\mapsto \ls{xy_2}e_1=\ls{x}e_1$), and the elements $\ls{y_2}e_2$ and $\ls{xy_2}e_2$ are fixed by conjugation by $y_1\not\in X$.

The chain constructed for $\PGU_3(q)$ extended by field automorphisms of order $p$ is depicted in Figure \ref{fig:triangulationPGU3PGU3Phi} (right), and labels are included for some of the conjugating elements of  $X$ and some of the $p$-rank 1 elementary abelian $p$-subgroups. Moreover, the horizontal equator coincides with the circle on the left. Besides the reflection induced by conjugation by $x$, now there exists another symmetry induced by conjugation by the diagonal element $d$, and recall that $[x,d]=1$ by the third part of Lemma \ref{lem:EXforPGUfield}(e). The elements $e_2$ and $\ls{y_2y_1}e_1$ are fixed by both symmetries and the elements at the apexes, $e_3=\Phi$ and $\ls{d}e_3$, are fixed by conjugation by $x$. In addition, some barycentric subdivisions are shown, see also Figure \ref{fig:barycentric_subdivision}. The white $2$-simplex corresponds to 
\[
\langle [e_2]\rangle<\langle [e_2],[e_1]\rangle<\langle [e_2],[e_1],[e_3]\rangle,
\]
and it is not identified with any other $2$-simplex. The two black $2$-simplices are equal.
\end{exa}

\begin{figure}[h!]
\centering

    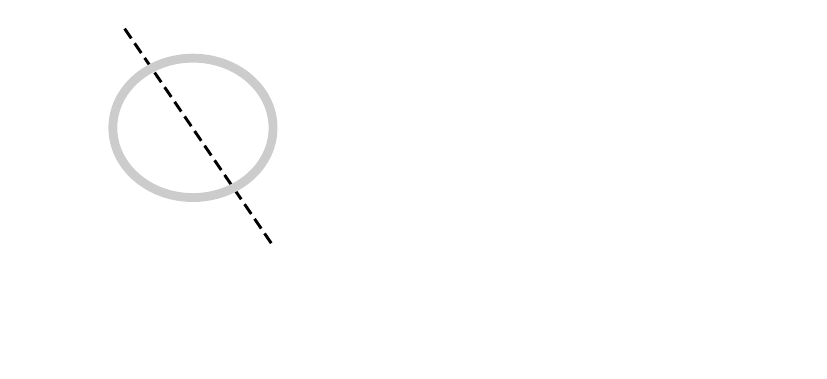

\caption{Complex for $\PGU_3(q)$ (left) and $\PGU_3(q)\langle \Phi\rangle$ (right).}
\label{fig:triangulationPGU3PGU3Phi}
\end{figure}

\section{Proof of Theorem B.}
\label{section:TheoremB}

We dedicate this section to provide details for the proof of Theorem B stated in the introduction, and we employ \cite[Proposition 4.8]{AS1993}.

\begin{proof}[Proof of Theorem B]
As preparation, we refer the reader to the account, in Proposition \ref{prop:pextensions_PSUnq}, of the $p$-extensions $LB$ of $L=\PSU_n(q)$ and their $p$-ranks, as well as the $p$-ranks of $\PGU_n(q)$ and its extension by field automorphisms of order $p$.

As additional setup, we make a short detour to reformulate the conclusions of Theorems \ref{thm:QDp_for_PGUn} and \ref{thm:QDp_for_PGUnfield} in some cases: An a posteriori analysis of the cycles constructed in these two theorems, reveals that the elementary abelian $p$-subgroups appearing in the simplices involved in those cycles, have generators of the form
\begin{equation}\label{equ:ThmB_generators}
\text{ either }\ls{[g]}[e]\text{ or }\Phi\text{ or }\ls{[d]}\Phi,
\end{equation}
where $g\in \GU_n(q)$, $e$ is a diagonal element of $T\leq \GU_n(q)$ of order $p$ by Lemmas \ref{lem:EXforPGU} and \ref{lem:EXforPGUfield}, and $d\in \SU_n(q)$ by Equation \eqref{equ:element_diagonal_d}. The second and third cases occur only in Theorem \ref{thm:QDp_for_PGUnfield}, and we have employed that $g\in \langle x,Y,d\rangle$ with $[x,\Phi]=[Y,\Phi]=1$ by Lemma \ref{lem:EXforPGUfield}(e). Moreover, we clearly have
\begin{equation}\label{equ:ThmB_23generators_in_PSUnq}
\text{$\Phi$ and $\ls{[d]}\Phi$ belong to $\PSU_n(q)\langle \Phi\rangle$.}
\end{equation}
We show next that, if $n_p<(q+1)_p$, then we also have
\begin{equation}\label{equ:ThmB_1generator_in_PSUnq}
\ls{[g]}[e]\in \PSU_n(q),
\end{equation}
where $\ls{[g]}[e]$ is the first generator listed in Equation \eqref{equ:ThmB_generators}. In fact, as $[g]$ normalizes $\PSU_n(q)$ by Equation \eqref{equ:PSU-PGU-cyclic}, it is enough to show that $[e]\in \PSU_n(q)$. This is a well known fact, see for instance the proof of Lemma \cite[Lemma 4.7(a)]{AS1993}, and we provide details: As $n_p<(q+1)_p$, there exists an element $\omega$ in $\FF^*_{q^2}$ of order $\ord(\omega)=pn_p$. In particular, $w^{q+1}=1$ and we may assume that $e=\diag(\mu^{i_1},\ldots,\mu^{i_n})$ with $\mu=\omega^{n_p}$. Then, if we write $n=n_pn'$ with $(n',p)=1$, we have, for any $m\in \ZZ$, that
\[
[e]=[e\diag(\omega^m,\ldots,\omega^m)]\text{ and }\det(e\diag(\omega^m,\ldots,\omega^m))=\omega^{n_p(i+n'm)},
\]
where $i=i_1+\ldots+i_n$. Thus, to obtain that $[e]\in \PSU_n(q)$, it is enough to find $m$ with $i+n'm\equiv 0\pmod{p}$, and this is possible because $n'$ is coprime to $p$. We conclude from Equations \eqref{equ:ThmB_generators}, \eqref{equ:ThmB_23generators_in_PSUnq}, and \eqref{equ:ThmB_1generator_in_PSUnq}, that, if $n_p<(q+1)_p$, then
\begin{equation}\label{equ:ThmB_PSUnq_n_p<(q+1)_p}
\widetilde{H}_{n-2}(|\A_p(\PSU_n(q))|;\ZZ)\neq 0\text{ and }
\widetilde{H}_{n-1}(|\A_p(\PSU_n(q)\langle \Phi\rangle|;\ZZ)\neq 0,
\end{equation}
under the assumptions of Theorems \ref{thm:QDp_for_PGUn} or \ref{thm:QDp_for_PGUnfield}, respectively, and where we have employed that $\PGU_n(q)$ and $\PSU_n(q)$, as well as its extensions by $\langle \Phi\rangle$, have the same $p$-rank if $n_p<(q+1)_p$ by Proposition \ref{prop:pextensions_PSUnq}.

We now come back to the statement of Theorem B and treat first the low dimensional cases $n=2,3$: For the case $n=2$, we have $\PSU_2(q)\cong\PSL_2(q)$, and $p$-extensions of $\PSL_2(q)$ satisfy $\Q\D_p$ by \cite[Theorem 3.1]{AS1993}, but for the excluded cases $(p,q)=(3,2)$, and $(p,q)=(3,8)$ if $B$ contains field automorphisms. If $n=3$ and $B$ does not contain fields automorphisms, then $\PSU_3(q)B$ satisfies $\Q\D_p$ by \cite[Proposition 4.8]{AS1993} but for $3\geq q(q-1)$, i.e., for the excluded case $(p,q)=(3,2)$. If $n=3$ and $B$ contains field automorphisms, then we have $n_p\leq 3\leq p$, and $p^2\leq (q+1)_p$ by \cite{Artin1955}, so that $n_p<(q+1)_p$. Thus, in the rest of the proof, we may assume that
\begin{equation}\label{equ:ThmB_assumption_n_q+1}
\text{either }n\geq 3\text{ and }n_p<(q+1)_p,\text{ or }n\geq 4\text{ and }n_p\geq (q+1)_p.
\end{equation}

For conciseness, we explicitly consider the possible generators of $B$: a diagonal automorphism $[g_1]$ of order $p$, and a field automorphism $\ls{[g_2]}\Phi^i$, where $i$ is co-prime to $p$, and $g_1,g_2\in \GU_n(q)$, see Subsection \ref{subsection:automorphisms_unitary:groups}. After raising to the appropriate power and conjugating $\PSU_n(q)$ by $[g_2^{-1}]$, we may assume that,
\begin{equation}\label{equ:can_use_Phi}
\text{If $B$ contains field automorphisms of order $p$, then $\Phi\in B$.}
\end{equation}
Now we proceed examining the $p$-extensions $LB$ of $L=\PSU_n(q)$ one by one.

\textbf{(i) $B=1$:} Then $LB=L$ and, if $n\geq 3$ and $n_p<(q+1)_p$, we are done by Equation \eqref{equ:ThmB_PSUnq_n_p<(q+1)_p}, where we exclude the case $(p,q)=(3,2)$. Otherwise, by Equation \eqref{equ:ThmB_assumption_n_q+1}, we have $n\geq 4$ and $n_p\geq (q+1)_p$, so that $p\mid n$ and, by \cite[Lemma 4.7(b)]{AS1993}, 
\[
\A_p(\PSU_{n-1}(q))\subseteq \A_p(\PSU_n(q)),
\]
where the groups involved have $p$-rank $n-2$. As $n-1\geq 3$ and $(n-1)_p=1$, we can apply the case above to the left-hand side.

\textbf{(ii) $B$ is cyclic generated by a field automorphism of order $p$:} If $n\geq 3$ and $n_p<(q+1)_p$, we are done by Equation \eqref{equ:ThmB_PSUnq_n_p<(q+1)_p}, where we are excluding the case $(p,q)=(3,8)$. Otherwise, $n\geq 4$ and $n_p\geq (q+1)_p$, and we use \cite[Lemma 4.7(b)]{AS1993} upon the addition of field automorphisms to obtain
\[
\A_p(\PSU_{n-1}(q)B)\subseteq \A_p(\PSU_n(q)B),
\]
where the groups involved have $p$-rank $n-1$. Note that $B$ acts on $\PSU_{n-1}(q)$ by  Equation \eqref{equ:can_use_Phi}. As $n-1\geq 3$ and $(n-1)_p=1$, we may apply here the previous case $n_p<(q+1)_p$ of this point (ii) to the left-hand side.

\textbf{(iii) $B$ is cyclic generated by a diagonal automorphisms of order $p$:} Equation \eqref{equ:PSU-PGU-cyclic} gives $\A_p(\PSU_n(q)B)=\A_p(\PGU_n(q))$, and this case follows from Theorem \ref{thm:QDp_for_PGUn} and the fact that $\rk_p(\PSU_n(q)B)=n-1$ by Proposition \ref{prop:pextensions_PSUnq}.

\textbf{(iv) $B$ has $p$-rank $2$ and it is generated by a field and a diagonal automorphism of order $p$:} By Equation \eqref{equ:can_use_Phi}, we have $B=\langle [g_1],\Phi\rangle$ and, by Equations \eqref{equ:ThmB_generators} and \eqref{equ:ThmB_23generators_in_PSUnq}, to show that the cycle constructed in Theorem \ref{thm:QDp_for_PGUnfield} belongs to $|\A_p(\PSU_n(q)B)|$, it is enough to show that the generators of the form $\ls{[g]}{[e]}$ belong to $\PSU_n(q)\langle [g_1]\rangle$, where $e\in T\leq \GU_n(q)$ has order $p$ and $g\in \GU_n(q)$. But $\ls{[g]}{[e]}=[\ls{g}e]\in \PGU_n(q)$ has order $p$ and, by Equation \eqref{equ:PSU-PGU-cyclic} again, we have, as in the earlier case (iii), that $\A_p(\PSU_n(q)\langle [g_1]\rangle)=\A_p(\PGU_n(q))$, so that the desired conclusion follows. Finally, as $\rk_p(\PSU_n(q)B)=n$ by Proposition \ref{prop:pextensions_PSUnq}, we conclude that $\PSU_n(q)B$ has $\Q\D_p$.
\end{proof}

\begin{remark}
Note that all non-trivial cycles constructed in Theorems \ref{thm:QDp_for_PGUn}, \ref{thm:QDp_for_PGUnfield}, and B, have an explicit description, a fact that might have further consequences, see \cite[p.12]{P2021} for instance.
\end{remark}


\begin{thebibliography}{99}

\bibitem{Artin1955} E.Artin, {\it The orders of the linear groups}, Comm. Pure Appl. Math., 8 (1955), 355--365.

\bibitem{AKO2011} M. Aschbacher, R. Kessar, B. Oliver, {\it Fusion systems in algebra and topology}, London Mathematical Society Lecture Note Series, vol. 391, Cambridge University Press, Cambridge, 2011.
 
\bibitem{AK1990} M.~Aschbacher, P.B.~Kleidman, {\it On a conjecture of Quillen and a lemma of Robinson}, Arch. Math. (Basel), 55 (1990), no.3, 209--217.  

\bibitem{AS1993} M.~Aschbacher, S.~Smith, {\it On Quillen's conjecture for the p-groups complex}, Ann. of Math. (2) 137 (1993), no. 3, 473--529.  
  
\bibitem{BS2008} D.J. Benson, S.D. Smith, {\it Classifying spaces of sporadic groups}, Mathematical Surveys and Monographs, vol. 147, American Mathematical Society, Providence, RI, 2008.  
  
\bibitem{Brown75} K.S. Brown, {\it Euler characteristics of groups: the {$p$}-fractional part}, Invent. Math., 29, (1975), 1, 1--5.

\bibitem{Diaz2016} A. D\'iaz Ramos, {\it On Quillen's conjecture for p-solvable groups}, Journal of Algebra, 513, 1, (2018), 246--264.

\bibitem{DiazMazza2020} A. D\'iaz Ramos, N. Mazza, {\it A geometric approach to Quillen's conjecture}, J. Group Theory 25 (2022), 91--112.

\bibitem{Dwyer1997} W. G. Dwyer, {\it Homology decompositions for classifying spaces of finite groups}, Topology 36, (1997), 783--804.

\bibitem{GL} D. Gorenstein, R. Lyons, {\it The local structure of finite groups of characteristic 2 type}. Mem. Amer. Math. Soc. 42 (1983), no. 276. 

\bibitem{GLSI} D.~Gorenstein, R.~Lyons, R.~Solomon, {\it The classification of the finite simple groups}.  Mathematical Surveys and Monographs, Volume 40.1, AMS, 1994.
  
\bibitem{GLSIII} D.~Gorenstein, R.~Lyons, R.~Solomon, {\it The classification of the finite simple groups}. Number 3. Mathematical Surveys and Monographs, Volume 40.3, AMS, 1998.

\bibitem{Grove} L.C. Grove, {\it Classical Groups and Geometric Algebra}, Graduate Studies in Mathematics, Volume 39, AMS, 2002.

\bibitem{HawksIsaacs1988} T. Hawkes, I. M. Isaacs, {\it On the poset of $p$-subgroups of a $p$-solvable group},
J. Lond. Math. Soc. (2) 38 (1988), no. 1, 77--86.

\bibitem{KasselTuraev} C.~Kassel, V.~Turaev, {\it Braid Groups}, Graduate Text in Mathematics, volume 247, Springer, 2008.

\bibitem{P2021} K.I. Piterman, {\it An approach to Quillen's conjecture via centralisers of               simple groups}, Forum Math. Sigma, vol. 9, (2021), Paper No. e48, 23.

\bibitem{PSC2021} K.I. Piterman, I. Sadofschi Costa, A. Viruel, {\it Acyclic 2-dimensional complexes
and Quillen's conjecture}, Publ. Mat. 65 (2021), no. 1, 129--140. 

\bibitem{PS2022} K. I. Piterman, S.D. Smith, {\it Some results on Quillen's Conjecture via equivalent-poset techniques}, J. Comb. Algebra 9 (2025), no. 3/4, 265–387.

\bibitem{Quillen1978} D.~Quillen, {\it Homotopy properties of the poset of nontrivial p-subgroups of a group},  Adv. Math. 28 (1978), no. 2,   101--128. 

\bibitem{Robinson88} G.R. Robinson, {\it Some remarks on permutation modules}, J. Algebra, vol. 118, (1988), no. 1, 46--62.

\bibitem{Smith2011} S.D. Smith, {\it Subgroup complexes}, Mathematical Surveys and Monographs, vol. 179, American Mathematical Society, Providence, RI, 2011.

\bibitem{Webb1987} P.J. Webb, {\it Subgroup complexes}, The Arcata Conference on Representations of Finite Groups (Arcata, Calif., 1986), Proc. Sympos. Pure Math., vol. 47, Amer. Math. Soc., Providence, RI, 1987, 349--365.

\end{thebibliography}
\end{document}